\documentclass[11pt,a4paper]{article}

\usepackage[margin=30mm]{geometry}
\usepackage{amsmath,amssymb,amsthm,mathtools,amsfonts}
\usepackage{bm}
\usepackage{hyperref}
\usepackage{enumitem}
\usepackage{microtype}

\hypersetup{
  colorlinks=true,
  linkcolor=blue, %black,
  citecolor=blue,
  urlcolor=blue
}

\newcommand{\alg}[1]{\mathfrak{#1}}
\newcommand{\CC}{\mathbb{C}}
\newcommand{\ZZ}{\mathbb{Z}}

\newcommand{\De}{\Delta}

\newtheorem{theorem}{Theorem}[section]
\newtheorem{proposition}[theorem]{Proposition}
\newtheorem{lemma}[theorem]{Lemma}

\theoremstyle{definition}
\newtheorem{definition}[theorem]{Definition}
\theoremstyle{remark}
\newtheorem{remark}[theorem]{Remark}
\newtheorem{example}{Example}[section]

\begin{document}
\title{\Large\bf Smith Algebra from 
\texorpdfstring{$\mathfrak{sl}_2$}{sl2} 
Tensor Product Branching \\
via the Mickelsson--Zhelobenko Algebra}

\author{Kohei Hasegawa$^\ast$
\and Takuya Matsumoto$^\dagger$
\and Akito Takashima$^\ddagger$}

\date{} % Start November 2024
\maketitle

\noindent
{\it
${}^{\dagger}$
Department of Applied Physics, 
School of Engineering, 
University of Fukui, 
$3$-$9$-$1$ Bunkyo, Fukui-shi, Fukui $910$-$8507$, Japan
}\\
\noindent
{\it  ${}^{\ast, \dagger, \ddagger}$
Fundamental Engineering for Knowledge-Based Society, 
Graduate School of Engineering, 
University of Fukui, 
$3$-$9$-$1$ Bunkyo, Fukui-shi, Fukui $910$-$8507$, Japan}
%\\
%{\it
%${}^{\dagger}$
%Department of Human and Artificial Intelligent Systems (HART), 
%Graduate School of Engineering, University of Fukui, 
%$3$-$9$-$1$ Bunkyo, Fukui-shi, Fukui $910$-$8507$, Japan
%}
\\
~\\
E-mail: {\tt \{$^\ast$mf250843, $^\dagger$takuyama, $^\ddagger$mf260880\}@u-fukui.ac.jp} 

\vspace{20mm}

\setcounter{footnote}{0}
\setcounter{page}{0}
\thispagestyle{empty}

%\newpage
%
%\tableofcontents

\begin{abstract}
Let $L_n$ be the finite-dimensional irreducible
$\mathfrak{sl}_2$-module of highest weight $n$.
We study the space of highest-weight vectors in
$L_n\otimes L_m$ for the diagonal
$\mathfrak{sl}_2$-action. The extremal projector
constructs these singular vectors and gives operators
connecting the highest-weight lines in the
Clebsch--Gordan decomposition. We interpret these
operators in the Mickelsson--Zhelobenko algebra of
the diagonal pair
$(\mathfrak{sl}_2\oplus\mathfrak{sl}_2,
\mathfrak{sl}_2^{\mathrm{diag}})$.
A rescaling of their quadratic relation yields the
cubic Smith relation $[E,F]=2(H^3-cH)$.
This cubic algebra is also known as a Higgs algebra
in the physics literature.
We express both the central parameter $c$ and the
Smith Casimir in terms of the quadratic
$\mathfrak{sl}_2$ Casimir elements of the two tensor
factors. 
The singular subspace becomes an irreducible
Smith module of dimension $\min\{n,m\}+1$,
with explicit formulas for the actions of
the raising and lowering operators $E$ and $F$.
Finally, comparison with finite-dimensional simple
Smith modules gives a necessary and sufficient
condition for a module to arise from this tensor-product
construction.
\end{abstract}

\newpage

\tableofcontents

\section{Introduction}

Let $L_n$ denote the irreducible $\mathfrak{sl}_2$-module
of highest weight $n\in\mathbb Z_{\geq0}$. For $n\geq m$,
the Clebsch--Gordan decomposition is
\[
L_n\otimes L_m
\simeq
\bigoplus_{r=0}^{m}L_{n+m-2r}
\]
under the diagonal $\mathfrak{sl}_2$-action. 
Each summand contributes one highest-weight line to
the singular subspace
\[
(L_n\otimes L_m)^+
=
\ker(e\otimes1+1\otimes e),
\]
where $e$ is the raising operator of $\mathfrak{sl}_2$.
We study the algebraic structure on this subspace:
how its highest-weight lines can be constructed and
how operators can move between them.

The {\it extremal projector} provides a systematic way to
construct highest-weight vectors \cite{AST,Z}. On the
finite-dimensional tensor product, its diagonal action
projects suitable vectors onto the singular subspace.
Projecting $e\otimes1$ and $f\otimes1$ produces operators
that connect singular vectors of adjacent highest weights.
The algebraic setting for these operators is the
{\it Mickelsson--Zhelobenko (MZ) algebra} associated with
\[
\bigl(\,\mathfrak{sl}_2\oplus\mathfrak{sl}_2\,,\;
\mathfrak{sl}_2^{\mathrm{diag}}\,\bigr).
\]
The construction goes back to Mickelsson's step algebras
\cite{Mick}, which encode operators acting on spaces
of highest-weight vectors.
For the diagonal pair considered here, the MZ algebra
is a rank-one diagonal reduction algebra \cite{KO}.

Our first main result is the appearance of a cubic
{\it Smith algebra} in this diagonal MZ algebra.
The projected raising and lowering operators satisfy
a quadratic relation with rational coefficients.
A suitable rescaling of these operators turns it
into the commutation relations 
\[
[H,E]=2E,\qquad [H,F]=-2F,\qquad
[E,F]=2(H^3-cH),
\]
where $c=T_1+T_2+1$ and $T_1,T_2$ are the Casimir
elements of the two tensor factors
(Theorem~\ref{thm:cubic}).
This gives a realization of the cubic Smith relations
\cite{S}; relations of this type also occur in the
{\it Higgs algebra} \cite{Hig,Eel}.

The realization also identifies the central parameters
of the Smith algebra. Besides
$c=T_1+T_2+1$, the {\it Smith Casimir} $\Omega$ 
has the particularly simple expression
\[
\Omega
=\frac14\bigl(-(T_1-T_2)^2
              +2(T_1+T_2)+3\bigr)
\]
(Proposition~\ref{prop:smith-central-parameters}).
Thus both central parameters are determined by the
Casimir elements of the two tensor factors. This
identification makes it possible to compare the Smith
modules arising from tensor products with abstract
highest-weight modules.

The algebraic realization has a concrete
representation-theoretic counterpart. For every
$n,m\geq0$, the singular subspace
$(L_n\otimes L_m)^+$ is an irreducible module for the
cubic Smith relations, of dimension
$\min\{n,m\}+1$ 
(Theorem~\ref{thm:smith-tensor-module}).
The theorem gives the actions of $E,F,H$ on a basis
of singular vectors, as well as the values of the
central parameters. In this representation, 
the Smith raising and lowering operators move between
the highest-weight vectors of successive summands
in the Clebsch--Gordan decomposition.

We then compare these modules with the
finite-dimensional simple modules of the abstract
cubic Smith algebra \cite{S}. Let $d$ be the dimension
of such a module, $\lambda$ its highest weight, and
$c$ its central parameter, and put
$\mu=\lambda-d+1$. Proposition~\ref{prop:smith-tensor-range}
shows that it arises from $(L_n\otimes L_m)^+$ if and
only if
\[
\mu\in\mathbb Z_{>0},\qquad
\mu\geq d,\qquad
c+1=\mu^2+d^2.
\]
After ordering the factors so that $n\geq m$, they are
uniquely determined by
\[
n=\mu-1,\qquad m=d-1.
\]

The paper is organized as follows. 
Section~\ref{sec:tensor}
reviews the diagonal action on tensor products.
Section~\ref{sec:projector} constructs the singular
vectors and the operators connecting them.
Section~\ref{sec:mz} introduces the diagonal MZ algebra
and derives its quadratic relation.
Section~\ref{sec:smith} obtains the cubic Smith relations
and identifies their central parameters.
Section~\ref{sec:representation} describes the resulting
modules and determines the range of the tensor-product
construction. 
Appendix~\ref{app:explicit} gives an explicit formula
for the singular vectors. Appendix~\ref{app:MZgen}
records representatives of the MZ and Smith generators,
and Appendix~\ref{app:QG} compares the cubic relation
formally with that of $U_q(\mathfrak{sl}_2)$.

\section{Tensor products and singular vectors}
\label{sec:tensor}

\subsection{The diagonal action on tensor products}

We work over $\mathbb C$.  Let $\mathfrak{sl}_2$ be the Lie algebra
with generators $e,f,h$ and relations
\[
[h,e]=2e,\qquad [h,f]=-2f,\qquad [e,f]=h.
\]
For $n\in\mathbb Z_{\geq0}$, let $L_n$ be the 
$(n+1)$-dimensional 
irreducible
$\mathfrak{sl}_2$-module of highest weight $n$, and choose a
highest-weight vector $1_n\in L_n$ such that
\[
e1_n=0,\qquad h1_n=n1_n.
\]
We use the basis
\[
v_k^{(n)}:=f^k1_n,\qquad 0\leq k\leq n,
\]
with the convention $v_{-1}^{(n)}=v_{n+1}^{(n)}=0$.
The action of $\mathfrak{sl}_2$ on this basis is
\[
e\,v_k^{(n)}=k(n-k+1)v_{k-1}^{(n)},\qquad
f\,v_k^{(n)}=v_{k+1}^{(n)},\qquad
h\,v_k^{(n)}=(n-2k)v_k^{(n)}.
\]

For $n,m\in\mathbb Z_{\geq0}$, the tensor product
$V=L_n\otimes L_m$ has dimension $(n+1)(m+1)$ and basis
\[
\bigl\{v_i^{(n)}\otimes v_j^{(m)}
       \mid 0\leq i\leq n,\ 0\leq j\leq m\bigr\}.
\]
It carries the diagonal $\mathfrak{sl}_2$-action induced by the
coproduct
\[
\Delta:U(\mathfrak{sl}_2)\longrightarrow
U(\mathfrak{sl}_2)\otimes U(\mathfrak{sl}_2),
\qquad
\Delta(x)=x\otimes1+1\otimes x
\quad (x\in \alg{sl}_2).
\]
For any $a,b\in U(\alg{sl}_2)$, 
it satisfies $\De(ab)=\De(a)\De(b)$. 
The action is explicitly given by 
\begin{align*}
\Delta(e)\bigl(v_i^{(n)}\otimes v_j^{(m)}\bigr)
&=i(n-i+1)v_{i-1}^{(n)}\otimes v_j^{(m)}
  +j(m-j+1)v_i^{(n)}\otimes v_{j-1}^{(m)},\\
\Delta(f)\bigl(v_i^{(n)}\otimes v_j^{(m)}\bigr)
&=v_{i+1}^{(n)}\otimes v_j^{(m)}
  +v_i^{(n)}\otimes v_{j+1}^{(m)},\\
\Delta(h)\bigl(v_i^{(n)}\otimes v_j^{(m)}\bigr)
&=(n+m-2i-2j)v_i^{(n)}\otimes v_j^{(m)}.
\end{align*}

\subsection{Singular vectors and the branching rule}

Let $V=L_n\otimes L_m$ with the diagonal
$\mathfrak{sl}_2$-action defined above.  
\begin{definition}
A nonzero vector
$v\in V$ is called a {\it singular vector} of weight $\lambda$
if
\[
\Delta(e)v=0,\qquad \Delta(h)v=\lambda v.
\]
\end{definition}
Such a vector is a highest-weight vector for the diagonal
$\mathfrak{sl}_2$-action.  Since $V$ is finite-dimensional,
$\lambda$ is a nonnegative integer, and the submodule
generated by $v$ is isomorphic to $L_\lambda$.
We write
\[
V^+:=\ker\Delta(e).
\]
Thus $V^+$ is the space spanned by the singular vectors;
an arbitrary element of $V^+$ need not be a weight vector.

Assume henceforth that $n\geq m$, and put $N=n+m$.
The Clebsch--Gordan rule gives the decomposition
\begin{equation}
\label{eq:CG}
V=L_n\otimes L_m
\simeq \bigoplus_{r=0}^{m}L_{N-2r}
\end{equation}
as a module for the diagonal $\mathfrak{sl}_2$.

For $\lambda\in\mathbb Z$, let $V_\lambda$ denote the
$\Delta(h)$-weight space of weight $\lambda$, and set
\[
V^+_\lambda:=V^+\cap V_\lambda.
\]
Each summand $L_{N-2r}$ in \eqref{eq:CG} has a
one-dimensional highest-weight space.  Since the highest
weights $N-2r$ are distinct, we obtain
\begin{equation}
\label{eq:singular-decomposition}
V^+=\bigoplus_{r=0}^{m}V^+_{N-2r},
\qquad
\dim V^+_{N-2r}=1
\quad (0\leq r\leq m).
\end{equation}
In particular, $\dim V^+=m+1$.

The decomposition \eqref{eq:singular-decomposition} identifies the
singular lines corresponding to the irreducible summands of
\eqref{eq:CG}.  We now construct vectors on these lines using the
extremal projector and study operators connecting adjacent lines.

\section{The extremal projector and singular vectors}
\label{sec:projector}

\subsection{The extremal projector}

The {\it extremal projector}, originating in the work of
Asherova--Smirnov--Tolstoy and developed by Zhelobenko,
is a standard tool for constructing highest-weight vectors
and operators acting on them; see \cite{AST,Z}.
For its role in the broader representation-theoretic setting
related to Yangians and Mickelsson algebras, see also
Molev \cite{Mol}.
In the case of $\mathfrak{sl}_2$, it takes the following form:
\begin{align}
\label{eq:p}
p&=\sum_{k=0}^{\infty}
f^k e^k\frac{(-1)^k }
{k!(h+2)(h+3)\cdots(h+k+1)}
\\
&=
1-fe\frac{1}{h+2}+f^2e^2\frac{1}{2(h+2)(h+3)}
-f^3e^3\frac{1}{6(h+2)(h+3)(h+4)}+\cdots. 
\notag 
\end{align}

The formula is interpreted after localizing the Cartan part.  Namely,
with $\mathfrak h=\CC h$,
let $R(\alg{h})$ denote the field of fractions of 
the commutative algebra 
$U(\alg{h})$ and consider the extension $U'(\alg{sl}_2)$ 
of the universal enveloping algebra 
$U(\alg{sl}_2)$ defined by 
\[
U'(\mathfrak{sl}_2)
=
U(\mathfrak{sl}_2)\otimes_{U(\mathfrak h)}R(\mathfrak h). 
\]
Introduce the vector space $F_m(\alg{sl}_2)$ of 
formal series of monomials 
$
f^ke^l 
$
of weight $m$ with coefficients $R(\alg h)$, where $2(l-k)=m$. 
Define $F(\alg{sl}_2)$ as the direct sum 
\[
F(\alg{sl}_2)=\bigoplus_m F_m(\alg{sl}_2). 
\]
That is, the elements of $F(\alg{sl}_2)$ are finite sums
$\sum_m a_m$ with $a_m\in F_m(\alg{sl}_2)$. 
It can be shown that $F(\alg{sl}_2)$ is an algebra with respect to 
the natural multiplication of formal series \cite{Z}. 
The extremal projector \eqref{eq:p} is algebraically understood 
as an element in $F(\alg{sl}_2)$.

On a finite-dimensional module the series has only finitely
many potentially nonzero terms on each vector. This does not
by itself make its rational coefficients evaluable at every
weight; its relation to a globally defined projection is
given in Proposition~\ref{prop:projector-on-modules}.

\begin{proposition}
\label{prop:projector-identities}
The extremal projector satisfies
\[
ep=0,\qquad pf=0,\qquad [h,p]=0,\qquad p^2=p.
\]
Moreover, $pv=v$ whenever $ev=0$ and the action of $p$ on 
a vector $v$ is defined.
\end{proposition}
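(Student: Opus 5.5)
The plan is to verify each identity by direct manipulation of the series \eqref{eq:p} in $F(\mathfrak{sl}_2)$. We write $p=\sum_k f^k e^k c_k(h)$ with $c_k(h)=(-1)^k/(k!(h+2)\cdots(h+k+1))$. We need two commutation formulas in $U(\mathfrak{sl}_2)$:
\[
[e,f^k]=kf^{k-1}(h-k+1),\qquad g(h)\,e=e\,g(h+2),\qquad g(h)\,f=f\,g(h-2),
\]
valid for any rational function $g$.

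\textbf{Step 1: $[h,p]=0$.} Each monomial $f^ke^k$ has weight $0$, so it commutes with $h$. The coefficients $c_k(h)$ also commute with $h$.

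\textbf{Step 2: $ep=0$.} Using the commutator formula,
\[
ef^ke^k=f^ke^{k+1}+kf^{k-1}(h-k+1)e^k=f^ke^{k+1}+kf^{k-1}e^k(h+k+1),
\]
since $e^k$ raises the weight by $2k$. Hence
\[
ep=\sum_k f^ke^{k+1}c_k(h)+\sum_{k\ge1}kf^{k-1}e^k(h+k+1)c_k(h).
\]
We collect the coefficient of $f^je^{j+1}$, taking $k=j$ in the first sum and $k=j+1$ in the second. The coefficient is
\[
c_j(h)+(j+1)(h+j+2)c_{j+1}(h).
\]
This vanishes exactly because of the recursion $c_{j+1}=-c_j/((j+1)(h+j+2))$, which is read off from the product formula.

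\textbf{Step 3: $pf=0$.} This is the symmetric computation. Moving $c_k(h)$ past $f$ gives $c_k(h)f=f\,c_k(h-2)$. Then $e^kf=fe^k+ke^{k-1}(h+k-1)$, and we commute $(h+k-1)$ to the right of $e^{k-1}$. The coefficient of $f^{j+1}e^{j}$ must telescope in the same way, again by the recursion for the $c_k$. The bookkeeping of the shifts is easy to get wrong, so this is where I would be most careful. An alternative is to invoke the Chevalley anti-involution $e\leftrightarrow f$, $h\mapsto h$. It maps $p$ to $\sum_k c_k(h)e^kf^k$, which is not literally $p$, so I prefer the direct check.

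\textbf{Step 4: $p^2=p$ and $pv=v$.} Write $p=1+\sum_{k\ge1}f^k(\cdots)$ with every term of $p-1$ beginning with $f$. Then $p(p-1)=0$ by Step 3, and so $p^2=p$. Similarly $p=1+\sum_{k\ge1}(\cdots)e^k$, because the coefficients $c_k(h)$ can be moved to the left of $e^k$ with shifted arguments; this gives $pv=v$ when $ev=0$. Here $c_k$ may be evaluated at the weight of $v$, which is exactly the hypothesis that the action of $p$ on $v$ is defined.

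The main obstacle is justifying that these manipulations are legitimate in $F(\mathfrak{sl}_2)$: rearranging infinite sums, and the fact that multiplying out in $p\cdot p$ only involves finitely many contributions to each monomial $f^ke^l$. I would handle this by citing the algebra structure from \cite{Z}. In that structure, products are computed term-by-term in normal order, and each normally ordered coefficient receives only finitely many contributions.
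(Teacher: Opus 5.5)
Your proposal is correct and takes the same route as the paper. Like the paper, you use termwise commutation through the series with consecutive-pair cancellation: your recursion $c_{j+1}=-c_j/((j+1)(h+j+2))$ handles $ep=0$, and for $pf$ the analogous cancellation is $c_j(h-2)+(j+1)(h+j)c_{j+1}(h-2)=0$. You also use weight zero for $[h,p]=0$, $p(1-p)=pf(\cdots)=0$ for idempotence, and $e^kv=0$ for the final claim.

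One small correction: the anti-involution $e\leftrightarrow f$, $h\mapsto h$ sends $f^ke^kc_k(h)$ to $c_k(h)f^ke^k$, not to $c_k(h)e^kf^k$. Since $f^ke^k$ has weight zero, $c_k(h)f^ke^k=f^ke^kc_k(h)$, so this anti-involution does fix $p$, and $pf=0$ follows at once from $ep=0$.
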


\begin{proof}
The identities $ep=0$ and $pf=0$ follow by commuting $e$ and $f$,
respectively, through the series \eqref{eq:p}: the resulting terms
cancel in consecutive pairs.  Each summand in \eqref{eq:p} has
weight zero, giving $[h,p]=0$.
The last equation is equivalent to $p(1-p)=0$. By the expression
\eqref{eq:p} and the property $pf=0$, it can be shown that 
\begin{align}
p(1-p)&=pf
\left(\sum_{k=1}^{\infty}
f^{k-1} e^k\frac{(-1)^{k+1}}
{k!(h+2)(h+3)\cdots(h+k+1)}\right)=0. 
\end{align}
If the action of $p$ on $v$ is defined and $ev=0$, 
all terms of \eqref{eq:p} except the $k=0$ term
annihilate $v$, so $pv=v$.  
This completes the proof. 
\end{proof}

The identities in Proposition~\ref{prop:projector-identities}
also characterize the projector: it acts as the identity on
highest-weight vectors and annihilates vectors obtained by
applying $f$.  Conversely, requiring $ep=0$ and a constant
term equal to $1$ determines the coefficients of a series of
the form $\sum_{k\geq0} f^k e^k a_k(h)$ recursively, yielding
\eqref{eq:p}.

We now describe its meaning on finite-dimensional modules.
Let $W$ be a finite-dimensional $\mathfrak{sl}_2$-module.
On each irreducible summand $L_\ell$ of $W$, the space of 
the highest-weight vectors is complementary 
to $fL_\ell$.  Taking the direct sum gives
\begin{equation}
\label{eq:projector-decomposition}
W=W^+\oplus fW\qquad\text{where}\qquad  
W^+=\{w\in W \mid ew=0\}. 
\end{equation}
The decomposition \eqref{eq:projector-decomposition} defines a
canonical linear projection
\[
p_W:W\longrightarrow W^+,
\qquad
p_W|_{W^+}=\operatorname{id},
\quad
p_W|_{fW}=0.
\]
This operator is defined on all of $W$, independently of
whether the rational coefficients in \eqref{eq:p} can be
evaluated on every weight space.

\begin{proposition}
\label{prop:projector-on-modules}
Let $W$ be a finite-dimensional $\mathfrak{sl}_2$-module.
Then
\[
p_WW=W^+,\qquad \ker p_W=fW,\qquad p_W^2=p_W.
\]
On every nonnegative weight space of $W$, the series
\eqref{eq:p} is termwise defined and its action agrees
with $p_W$.
\end{proposition}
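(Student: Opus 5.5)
The first three claims are linear algebra built into the definition of $p_W$, and the real content is the agreement of the series \eqref{eq:p} with $p_W$ on nonnegative weight spaces. Since $W=W^+\oplus fW$ by \eqref{eq:projector-decomposition} and $p_W$ is the projection onto the first summand along the second, we get $p_WW=W^+$, $\ker p_W=fW$ and $p_W^2=p_W$ at once. We only need that the sum in \eqref{eq:projector-decomposition} is direct and exhausts $W$. By complete reducibility it suffices to check this on a single $L_\ell$, where $L_\ell^+=\CC v_0^{(\ell)}$ and $fL_\ell=\operatorname{span}\{v_k^{(\ell)}:k\geq1\}$, using the basis of Section~\ref{sec:tensor}. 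The decomposition is canonical because $W^+=\ker e$ and $fW$ are defined without choosing a decomposition.

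For the series, fix a weight $\lambda\geq0$ and $w\in W_\lambda$. On $W_\lambda$ the term $f^ke^k\,a_k(h)$ acts as $a_k(\lambda)f^ke^k$, since $h$ acts by the scalar $\lambda$ on $w$ before $e^k$ is applied. Here I read the coefficients as sitting to the right, acting first, which is the convention in \eqref{eq:p}. The denominators are $(\lambda+2)\cdots(\lambda+k+1)$, which are nonzero for $\lambda\geq0$, so every term is defined. Moreover $e^kw=0$ for $k$ large, since the weights of $W$ are bounded. Hence the series gives a well-defined linear operator $P_\lambda$ on $W_\lambda$.

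To show $P_\lambda=p_W|_{W_\lambda}$, decompose $W_\lambda=W^+_\lambda\oplus (fW)_\lambda$, which follows from the weight-compatibility of the decomposition since $W^+$ and $fW$ are $h$-stable. On $W^+_\lambda$ the operator $P_\lambda$ is the identity by the last statement of Proposition~\ref{prop:projector-identities}. It remains to show $P_\lambda$ kills $(fW)_\lambda=fW_{\lambda+2}$. Formally this is the identity $pf=0$. The main obstacle is to justify applying that formal identity in $F(\mathfrak{sl}_2)$ to a vector $fu$ with $u\in W_{\lambda+2}$, when $p$ itself might not be evaluable on the weight $\lambda+2$ space. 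In fact it is evaluable there, since $\lambda+2\geq0$, but the identity should be checked at the level of evaluated operators. I would verify it by direct computation. Moving $e^k$ past $f$ with $e^kf=fe^k+k\,e^{k-1}(h-k+1)$, one writes $P_\lambda f u$ as a finite sum. The coefficient of each $f^{j}e^{j-1}u$ then vanishes, because consecutive terms cancel with scalar coefficients evaluated at $\lambda$ and $\lambda+2$. This is exactly the telescoping behind $pf=0$, now with only finitely many nonzero terms, all with nonzero denominators.

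An alternative that sidesteps the commutation computation is to reduce to one irreducible $L_\ell$ with $\ell\geq\lambda$ and $\lambda\equiv\ell \pmod 2$. There $W_\lambda$ is spanned by $v_k^{(\ell)}$ with $k=(\ell-\lambda)/2$. Using $e\,v_j=j(\ell-j+1)v_{j-1}$, one checks that $\sum_{i}\frac{(-1)^i}{i!(\lambda+2)\cdots(\lambda+i+1)}\prod(\dots)$ vanishes for $k\geq1$ and equals $1$ for $k=0$, which is a terminating hypergeometric (Chu--Vandermonde) identity. I would present the telescoping version and keep this as a sanity check.
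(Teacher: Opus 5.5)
Your proposal is correct and follows the paper's proof. The first three claims come from $W=W^+\oplus fW$; the series is termwise evaluable on $W_\lambda$ because $(\lambda+2)\cdots(\lambda+k+1)\neq0$ for $\lambda\geq0$; it fixes highest-weight vectors; and it kills $fW_{\lambda+2}$ via $pf=0$, where your explicit telescoping check makes rigorous at the level of evaluated operators what the paper only cites as the formal identity. One small correction to that check: with the coefficients acting first, the rule is $e^kf=fe^k+k\,e^{k-1}(h+k-1)=fe^k+k(h-k+1)e^{k-1}$ (your version, with $h$ on the right, fails for $k\geq2$), and with $a_k(\lambda)=(-1)^k/\bigl(k!(\lambda+2)\cdots(\lambda+k+1)\bigr)$ the cancellation is then exactly $(j+1)(\lambda+j+2)\,a_{j+1}(\lambda)=-a_j(\lambda)$.
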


\begin{proof}
The first assertions follow from
\eqref{eq:projector-decomposition}. Since $e$ acts locally
nilpotently, only finitely many terms of \eqref{eq:p} can
contribute on a given vector. At a nonnegative weight
$\lambda$, all denominators
$(\lambda+2)\cdots(\lambda+k+1)$ are nonzero, so the
series can be evaluated term by term.

Decompose $W$ into irreducible summands. On each
highest-weight line, the series acts as the identity.
Every other weight vector in that summand belongs to
$fL_\ell$. On a nonnegative weight space the series is
termwise evaluable, and the identity $pf=0$ shows that
it annihilates these descendant vectors. Thus its action
agrees with $p_W$ on every nonnegative weight space.
\end{proof}

For $V=L_n\otimes L_m$, we apply this construction to the
diagonal $\mathfrak{sl}_2$-action and write $\Delta(p)_V$
for the resulting projection $p_V$. When no confusion can
arise, we abbreviate $\Delta(p)_V$ to $\Delta(p)$. Thus
\[
\Delta(p)V=V^+,\qquad
\ker\Delta(p)=\Delta(f)V,\qquad
V=V^+\oplus\Delta(f)V.
\]

\subsection{Construction of singular vectors}

Recall that $n\geq m$ and $N=n+m$.  For $0\leq r\leq m$,
define
\begin{equation}
\label{eq:singdef}
s_{N-2r}
:=
\Delta(p)(f^r1_n\otimes1_m).
\end{equation}
The vector to which $\Delta(p)$ is applied has weight
$N-2r\geq0$.  Hence the denominators in the series
\eqref{eq:p} are nonzero on this weight space, and the
series has only finitely many nonzero terms.

Since $\Delta(e)\Delta(p)=0$ and
$[\Delta(h),\Delta(p)]=0$, we have
\[
\Delta(e)s_{N-2r}=0,\qquad
\Delta(h)s_{N-2r}=(N-2r)s_{N-2r}.
\]
It remains to show that these vectors are nonzero.

\begin{proposition}
\label{prop:singular-basis}
For each $0\leq r\leq m$, the vector $s_{N-2r}$ is
nonzero.  Consequently,
\[
V^+_{N-2r}=\mathbb C s_{N-2r},
\qquad
V^+=\bigoplus_{r=0}^{m}\mathbb C s_{N-2r}.
\]
\end{proposition}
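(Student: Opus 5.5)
The plan is to show $s_{N-2r}\neq0$ by using that $\Delta(p)=p_V$ has kernel $\Delta(f)V$ (Proposition~\ref{prop:projector-on-modules}). So it suffices to prove
\[
f^r1_n\otimes1_m\notin\Delta(f)V \qquad (0\leq r\leq m).
\]
Once nonvanishing is known, the rest is immediate. The vector $s_{N-2r}$ lies in $V^+_{N-2r}$, and this space is one-dimensional by \eqref{eq:singular-decomposition}. Hence $V^+_{N-2r}=\mathbb C s_{N-2r}$, and the direct sum decomposition of $V^+$ follows from \eqref{eq:singular-decomposition}.

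To prove the non-membership I would work in the weight space $V_{N-2r}$. It has basis $v_i^{(n)}\otimes v_{r-i}^{(m)}$ for $0\leq i\leq r$; this uses $r\leq m\leq n$, so all $r+1$ vectors exist. The part $\Delta(f)V\cap V_{N-2r}=\Delta(f)V_{N-2r+2}$ is spanned by the images
\[
\Delta(f)\bigl(v_i^{(n)}\otimes v_{r-1-i}^{(m)}\bigr)
=v_{i+1}^{(n)}\otimes v_{r-1-i}^{(m)}+v_i^{(n)}\otimes v_{r-i}^{(m)},
\qquad 0\leq i\leq r-1 .
\]
I will use the linear functional $\phi$ on $V_{N-2r}$ defined by
\[
\phi\bigl(v_i^{(n)}\otimes v_{r-i}^{(m)}\bigr)=(-1)^i .
\]
Each image above has two terms with indices $i+1$ and $i$, so $\phi$ sends it to $(-1)^{i+1}+(-1)^i=0$. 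Thus $\phi$ vanishes on $\Delta(f)V_{N-2r+2}$. On the other hand, $f^r1_n\otimes1_m=v_r^{(n)}\otimes v_0^{(m)}$ gives $\phi=(-1)^r\neq0$. Therefore $f^r1_n\otimes1_m\notin\Delta(f)V$, and so $s_{N-2r}\neq0$.

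The main point needing care is the range of indices: all the vectors $v_{i+1}^{(n)}$ and $v_{r-1-i}^{(m)}$ appearing in the images must be genuine basis vectors, so that no truncation by the convention $v_{n+1}^{(n)}=0$ breaks the telescoping. Since $i+1\leq r\leq m\leq n$ and $r-1-i\geq0$, no truncation occurs. An alternative route would be to compute the coefficient of $1_n\otimes f^r1_m$ in the series \eqref{eq:p} directly, but the kernel argument avoids evaluating the series.
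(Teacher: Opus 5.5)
Your proposal is correct and matches the paper's own proof. The paper also reduces to showing $f^r1_n\otimes1_m\notin\Delta(f)V$ via $\ker\Delta(p)=\Delta(f)V$, and it uses the alternating functional $\sigma(u_a)=(-1)^{r-a}$ on $V_{N-2r}$, which is your $\phi$ up to the overall sign $(-1)^r$. The conclusion then follows from the one-dimensionality in \eqref{eq:singular-decomposition}, exactly as you argue.
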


\begin{proof}
Let
\[
u_a:=f^a1_n\otimes f^{r-a}1_m
\qquad(0\leq a\leq r).
\]
These vectors form a basis of $V_{N-2r}$, since
$r\leq m\leq n$.  
Define a linear functional
$\sigma:V_{N-2r}\to\mathbb C$ by
\[
\sigma(u_a)=(-1)^{r-a}.
\]
The space $V_{N-2r+2}$ has basis
$f^a1_n\otimes f^{r-1-a}1_m$ for $0\leq a<r$.
Applying $\Delta(f)$ to such a basis vector gives
$u_{a+1}+u_a$, which is annihilated by $\sigma$.
Thus $\sigma$ vanishes on
$\Delta(f)V_{N-2r+2}$, whereas
$\sigma(f^r1_n\otimes1_m)=\sigma(u_r)=1$.

It follows that $f^r1_n\otimes1_m$ does not belong to
$\Delta(f)V$.  By Proposition~\ref{prop:projector-on-modules},
it is not in the kernel of $\Delta(p)$, so
$s_{N-2r}\ne0$.  The assertions now follow from
\eqref{eq:singular-decomposition}.
\end{proof}

In particular, $s_N=1_n\otimes1_m$.  The first
nontrivial example is
\[
s_{N-2}
=
\frac{m}{N}\,f1_n\otimes1_m
-
\frac{n}{N}\,1_n\otimes f1_m
\qquad (m\geq1).
\]
A closed expression for all $s_{N-2r}$ is given in
Appendix~\ref{app:explicit}.

\subsection{Operators on singular vectors}

For operators acting on the two tensor factors, we use the
notation
\[
e_1=e\otimes1,\quad e_2=1\otimes e,\qquad
f_1=f\otimes1,\quad f_2=1\otimes f,
\]
and similarly $h_1,h_2$.  Thus
\[
\Delta(e)=e_1+e_2,\qquad
\Delta(f)=f_1+f_2,\qquad
\Delta(h)=h_1+h_2.
\]

Define operators on the singular subspace $V^+$ by
\begin{equation}
\label{eq:projected-operators}
z^-=\Delta(p)f_1\big|_{V^+},
\qquad
z^+=\Delta(p)e_1\big|_{V^+}.
\end{equation}
The final projection ensures that both operators preserve $V^+$.
For calculations on the full tensor product $V$, we extend
these operators by zero on $\Delta(f)V$.  Their extensions
are then represented by
\begin{equation}
\label{eq:sandwiched-operators}
z^-=\Delta(p)f_1\Delta(p),
\qquad
z^+=\Delta(p)e_1\Delta(p).
\end{equation}

Because $\Delta(p)$ commutes with $\Delta(h)$, while
\[
[\Delta(h),f_1]=-2f_1,\qquad
[\Delta(h),e_1]=2e_1,
\]
the projected operators satisfy
\begin{equation}
\label{eq:projected-weights}
[\Delta(h),z^-]=-2z^-,
\qquad
[\Delta(h),z^+]=2z^+.
\end{equation}
Consequently,
\[
z^-V^+_\lambda\subseteq V^+_{\lambda-2},
\qquad
z^+V^+_\lambda\subseteq V^+_{\lambda+2}.
\]
In particular, for the singular vectors constructed above,
\[
z^-s_{N-2r}\in\mathbb C s_{N-2r-2}
\quad(0\leq r<m),
\qquad
z^+s_{N-2r}\in\mathbb C s_{N-2r+2}
\quad(1\leq r\leq m).
\]
At the endpoints, $z^-s_{n-m}=0$ and $z^+s_N=0$.

The normalization in \eqref{eq:singdef} makes the action of
$z^-$ particularly simple:
\begin{equation}
\label{eq:zminus-action}
z^-s_{N-2r}=s_{N-2r-2}
\qquad (0\leq r<m).
\end{equation}
Indeed, put $q_r=f_1^r(1_n\otimes1_m)$, so that
$s_{N-2r}=\Delta(p)q_r$.  By
Proposition~\ref{prop:projector-on-modules},
\[
q_r-s_{N-2r}\in\ker\Delta(p)=\Delta(f)V.
\]
Since $f_1$ commutes with $\Delta(f)=f_1+f_2$, it preserves
$\Delta(f)V$.  Applying $\Delta(p)f_1$ therefore gives
\[
z^-s_{N-2r}
=\Delta(p)f_1s_{N-2r}
=\Delta(p)f_1q_r
=\Delta(p)q_{r+1}
=s_{N-2r-2}.
\]
Together with $s_N=1_n\otimes1_m$, this implies
\begin{equation}
\label{eq:singular-chain}
%\label{eq:zminus-action}
s_{N-2r}=(z^-)^r s_N
\qquad (0\leq r\leq m).
\end{equation}

The operator $z^+$ moves in the opposite direction.
Since each singular weight space is one-dimensional, there
are scalars $\alpha_r$ such that
\begin{equation}
\label{eq:zplus-action}
z^+s_{N-2r}=\alpha_r s_{N-2r+2}
\qquad (1\leq r\leq m).
\end{equation}
These scalars are nonzero.  To see this,
equip each $L_n$ with the positive-definite Hermitian
inner product for which the vectors $v_k^{(n)}=f^k1_n$
are orthogonal and
\[
\langle 1_n,1_n\rangle=1,\qquad
\lVert v_k^{(n)}\rVert^2
=\langle v_k^{(n)}, v_k^{(n)}\rangle
=
\frac{k!n!}{(n-k)!}
\quad(0\leq k\leq n).
\]
We take the inner product to be linear in the first
argument.  With this normalization, $e^\dagger=f$ and
$h^\dagger=h$.  Give $V=L_n\otimes L_m$ the tensor-product
inner product, defined by
\[
\langle u_1\otimes u_2,\,
        v_1\otimes v_2\rangle_V
=
\langle u_1,v_1\rangle_{L_n}
\langle u_2,v_2\rangle_{L_m},
\]
and write $\lVert v\rVert^2=\langle v,v\rangle_V$.
Since $\Delta(e)^\dagger=\Delta(f)$, we have
$\Delta(f)V=(\ker\Delta(e))^\perp$.  Thus $\Delta(p)$ is
the orthogonal projection onto $V^+$, and
\eqref{eq:sandwiched-operators} gives
$(z^-)^\dagger=z^+$ on $V^+$.
Note that 
$z^-s_{N-2r+2}=s_{N-2r}$ in \eqref{eq:zminus-action},
and all vectors $s_{N-2r}$ are nonzero by
Proposition~\ref{prop:singular-basis}.  
Taking the inner product of
\eqref{eq:zplus-action} with $s_{N-2r+2}$, we obtain
\[
\alpha_r\lVert s_{N-2r+2}\rVert^2
=\langle z^+s_{N-2r},s_{N-2r+2}\rangle_V
=
\lVert s_{N-2r}\rVert^2>0. 
\]
Hence $\alpha_r>0$ for $1\leq r\leq m$.
In summary, we obtain the following proposition. 

\begin{proposition}
\label{prop:projected-operators-action}
The operators $z^\pm$ act on $V^+$ as follows:
\[
[\Delta(h),z^\pm]=\pm2z^\pm.
\]
For $0\leq r<m$ and $1\leq r\leq m$, respectively,
\[
z^-s_{N-2r}=s_{N-2r-2},
\qquad
z^+s_{N-2r}=\alpha_r s_{N-2r+2},
\]
where $\alpha_r>0$.  At the endpoints,
$z^-s_{n-m}=0$, $z^+s_N=0$. 
In particular,
\[
s_{N-2r}=(z^-)^r s_N
\qquad(0\leq r\leq m).
\]
\end{proposition}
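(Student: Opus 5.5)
This proposition only collects facts already established in the discussion preceding it, so the proof assembles them in order. First, the weight relations $[\Delta(h),z^\pm]=\pm2z^\pm$ follow from the sandwiched form \eqref{eq:sandwiched-operators}. Indeed, $\Delta(p)$ commutes with $\Delta(h)$ by Proposition~\ref{prop:projector-identities}, and $[\Delta(h),f_1]=-2f_1$, $[\Delta(h),e_1]=2e_1$, since $h_2$ commutes with $e_1,f_1$ and $[h_1,e_1]=2e_1$, $[h_1,f_1]=-2f_1$. Consequently $z^\pm$ shifts $V^+_\lambda$ into $V^+_{\lambda\pm2}$. Combined with the one-dimensionality of the singular weight spaces and the basis $\{s_{N-2r}\}$ of Proposition~\ref{prop:singular-basis}, this shows that $z^\pm s_{N-2r}$ is a scalar multiple of $s_{N-2r\mp2}$.

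Second, the identity $z^-s_{N-2r}=s_{N-2r-2}$ uses the argument given above. Put $q_r=f_1^r(1_n\otimes1_m)$. By Proposition~\ref{prop:projector-on-modules}, $q_r-s_{N-2r}\in\Delta(f)V=\ker\Delta(p)$. Since $f_1$ commutes with $\Delta(f)$, it preserves $\Delta(f)V$, and therefore $\Delta(p)f_1s_{N-2r}=\Delta(p)q_{r+1}$. For $r<m$ we have $r+1\le m$, so this vector is $s_{N-2r-2}$ by definition \eqref{eq:singdef}. Iterating from $s_N=\Delta(p)(1_n\otimes1_m)=1_n\otimes1_m$ (the vector is already singular) gives $s_{N-2r}=(z^-)^rs_N$ for $0\le r\le m$.

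Third, the endpoint statements follow from weights. The vector $z^-s_{n-m}$ lies in $V^+_{n-m-2}$, which is zero by \eqref{eq:singular-decomposition}, since the lowest singular weight is $N-2m=n-m$. Likewise $z^+s_N\in V^+_{N+2}=0$. The scalars $\alpha_r$ then exist by one-dimensionality.

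The only substantive step is the positivity $\alpha_r>0$. For this I would use the Hermitian inner product on $V$ with $e^\dagger=f$. Then $\Delta(e)^\dagger=\Delta(f)$, so $\Delta(f)V=(\ker\Delta(e))^\perp$ and $\Delta(p)$ is an orthogonal projection, hence self-adjoint. It follows that $(z^-)^\dagger=z^+$, since $e_1^\dagger=f_1$. Pairing $z^+s_{N-2r}=\alpha_rs_{N-2r+2}$ with $s_{N-2r+2}$ and using $z^-s_{N-2r+2}=s_{N-2r}$ gives
\[
\alpha_r\lVert s_{N-2r+2}\rVert^2=\lVert s_{N-2r}\rVert^2>0.
\]
Both norms are positive by Proposition~\ref{prop:singular-basis}, so $\alpha_r>0$. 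The point I expect to need the most care is checking that the norms $\lVert v_k^{(n)}\rVert^2=k!\,n!/(n-k)!$ really make $e^\dagger=f$, so that $\Delta(p)$ is self-adjoint. This is verified by comparing $\langle f v_k,v_{k+1}\rangle=\lVert v_{k+1}\rVert^2$ with $\langle v_k,e v_{k+1}\rangle=(k+1)(n-k)\lVert v_k\rVert^2$, which agree.
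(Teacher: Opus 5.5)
Your proposal is correct and follows the paper's own argument step by step. It covers the weight relations from the sandwiched form, the identity $z^-s_{N-2r}=s_{N-2r-2}$ via $q_r-s_{N-2r}\in\Delta(f)V$ and $[f_1,\Delta(f)]=0$, the endpoints by weight, and $\alpha_r>0$ from $(z^-)^\dagger=z^+$ for the orthogonal projection $\Delta(p)$; your explicit check that the chosen norms give $e^\dagger=f$ fills in a detail the paper only asserts.
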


Thus $z^-$ and $z^+$ connect the singular vectors in the chain
\[
s_N
\ \underset{z^+}{\overset{z^-}{\rightleftarrows}}\
s_{N-2}
\ \underset{z^+}{\overset{z^-}{\rightleftarrows}}\
\cdots
\ \underset{z^+}{\overset{z^-}{\rightleftarrows}}\
s_{n-m}.
\]
We now turn to the algebra containing these projected
operators and the relations governing their multiplication.

\section{The diagonal Mickelsson--Zhelobenko algebra}
\label{sec:mz}

In the preceding section, we saw that the projected operators
$z^\pm$ move between the singular vectors of $L_n\otimes L_m$.
We now place these operators in an algebraic framework: the
{\it Mickelsson--Zhelobenko algebra} associated with the diagonal
embedding of $\mathfrak{sl}_2$. In the terminology of
Khoroshkin--Ogievetsky, this is a rank-one instance of
\emph{diagonal reduction} \cite{KO}.

\subsection{The diagonal pair and the Mickelsson--Zhelobenko algebra}
\label{subsec:diagonal-mz}

Let
\[
\mathfrak g=\mathfrak{sl}_2\oplus\mathfrak{sl}_2,
\qquad
\mathfrak k=\mathfrak{sl}_2^{\mathrm{diag}}
=\{(x,x)\mid x\in\mathfrak{sl}_2\}.
\]
We identify $U(\mathfrak g)$ with
$U(\mathfrak{sl}_2)\otimes U(\mathfrak{sl}_2)$,
and $U(\mathfrak k)$ with
$\Delta(U(\mathfrak{sl}_2))$,
the image of the coproduct $\Delta$.
Write
\[
E_{\mathrm d}=\Delta(e)=e_1+e_2,\qquad
F_{\mathrm d}=\Delta(f)=f_1+f_2,\qquad
D=\Delta(h)=h_1+h_2.
\]
The anti-diagonal subspace
\[
\mathfrak p=\{(x,-x)\mid x\in\mathfrak{sl}_2\}
\]
gives the symmetric decomposition
$\mathfrak g=\mathfrak k\oplus\mathfrak p$.

Consider the triangular decomposition
\[
\mathfrak k=\mathfrak k^-\oplus\mathfrak k^0
             \oplus\mathfrak k^+,
\qquad
\mathfrak k^-=\mathbb C F_{\mathrm d},\quad
\mathfrak k^0=\mathbb C D,\quad
\mathfrak k^+=\mathbb C E_{\mathrm d}.
\]
Let $R(\alg{k}^0)=\mathbb C(D)$ be the field of fractions of
$U(\mathfrak k^0)=\mathbb C[D]$, and let $U'(\mathfrak g)$
denote the localization of $U(\mathfrak g)$ with respect to
the nonzero elements of $\mathbb C[D]$, 
\[
U'(\mathfrak g)
=
U(\mathfrak g)\otimes_{U(\alg{k}^0)}R(\alg{k}^0).
\]
Its multiplication
retains the weight shifts: if $[D,x]=\lambda x$, then
\[
q(D)x=xq(D+\lambda)
\qquad\bigl(q(D)\in\mathbb C(D)\bigr).
\]
Set
\[
J=U'(\mathfrak g)E_{\mathrm d},
\qquad
M(\alg{g},\alg{k})=U'(\mathfrak g)/J.
\]

\begin{definition}
\label{def:diagonal-mz}
The {\it Mickelsson--Zhelobenko algebra} ({\it MZ algebra}, in short) 
of the pair
$(\mathfrak g,\mathfrak k)$ is the space of
$\mathfrak k^+$-singular elements in $M(\mathfrak g,\mathfrak k)$:
\[
Z(\mathfrak g,\mathfrak k)
=
\{\,\bar x\in M(\alg{g},\alg{k})\mid E_{\mathrm d}\bar x=0\,\}.
\]
Its multiplication is induced by the identification
\[
Z(\mathfrak g,\mathfrak k)
\simeq \operatorname{Norm}(J)/J,
\qquad
\operatorname{Norm}(J)
=
\{x\in U'(\mathfrak g)\mid Jx\subseteq J\}.
\]
\end{definition}

Indeed, since $J$ is generated as a left ideal by
$E_{\mathrm d}$, the condition $Jx\subseteq J$ is equivalent
to $E_{\mathrm d}x\in J$. Thus the two descriptions in
Definition~\ref{def:diagonal-mz} agree. The quotient 
$M(\mathfrak g,\mathfrak k)$ is
a left $U'(\mathfrak g)$-module, while multiplication of
representatives in $\operatorname{Norm}(J)$ gives the algebra
structure on $Z(\mathfrak g,\mathfrak k)$.

\subsection{Projected generators and their action}
\label{subsec:mz-generators}

The operators $z^\pm$ were defined in
Section~\ref{sec:projector} on the singular subspace of
$L_n\otimes L_m$. We now regard them as the actions of
elements of $Z(\mathfrak g,\mathfrak k)$, denoted by the
same symbols. Their classes in $M(\alg{g},\alg{k})$ are represented by
\[
z^+=\Delta(p)e_1\Delta(p),\qquad
z^-=\Delta(p)f_1\Delta(p).
\]
Throughout, we suppress the notation for classes modulo
$J$. Since $\Delta(p)=1$ modulo $J$, the rightmost projector
does not change these classes; we retain it to make products
of projected elements explicit.

These expressions involve only finitely many terms modulo
$J$. Indeed,
\[
E_{\mathrm d}e_1\in J,\qquad
E_{\mathrm d}^{\,3}f_1\in J.
\]
The projector identity $E_{\mathrm d}\Delta(p)=0$ then
shows that $z^\pm$ are $E_{\mathrm d}$-singular in $M(\alg{g},\alg{k})$.
Thus $z^\pm\in Z(\mathfrak g,\mathfrak k)$.
Explicit representatives of $z^\pm$ modulo $J$ are given
in Appendix~\ref{app:MZgen}.

Since $[D,\Delta(p)]=0$, while
$[D,e_1]=2e_1$ and $[D,f_1]=-2f_1$, we have
\begin{equation}
\label{eq:mz-weight-relations}
[D,z^+]=2z^+,\qquad [D,z^-]=-2z^-.
\end{equation}

On $V=L_n\otimes L_m$, let $\Delta(p)$ denote the projection
onto $V^+$ along $F_{\mathrm d}V$, as in
Section~\ref{sec:projector}. For $v\in V^+$ we then have
\[
z^+v=\Delta(p)e_1v,\qquad
z^-v=\Delta(p)f_1v.
\]
Consequently, the elements $z^\pm$ recover the operators
that move between the singular vectors in
Section~\ref{sec:projector}.

\subsection{The quadratic relation}
\label{subsec:mz-relation}

Let
\[
T=h^2+2h+4fe
\]
be the quadratic Casimir element of $U(\mathfrak{sl}_2)$, and set
\[
T_1=T\otimes1,\qquad T_2=1\otimes T.
\]
Both $T_1$ and $T_2$ are central in $Z(\mathfrak g,\mathfrak k)$.
The coproduct is calculated as 
\[
\De(T)=T_1+T_2+2T_{12}, \qquad \text{where}
\qquad 
T_{12}=h_1h_2+2(f_1e_2+f_2e_1). 
\]
Sandwiching it by $\De(p)$, we have 
\begin{align}
\label{eq:h1h2}
\De(p)D(D+2)\De(p)=\De(p)
\left[T_1+T_2+2h_1h_2-8f_1e_1\right]
\De(p). 
\end{align}

\begin{proposition}
\label{prop:mz-relations}
The elements $D,z^+,z^-\in Z(\mathfrak g,\mathfrak k)$
satisfy
\[
[D,z^+]=2z^+,\qquad [D,z^-]=-2z^-,
\]
and
\begin{equation}
\label{eq:mzrat}
z^+z^-
=
z^-z^+\frac{(D+2)(D+3)}{D(D+1)}
+
\frac{D(D+2)-T_1-T_2}{2D}.
\end{equation}
\end{proposition}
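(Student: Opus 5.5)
The weight relations $[D,z^\pm]=\pm2z^\pm$ are already \eqref{eq:mz-weight-relations}, so only \eqref{eq:mzrat} needs proof. The plan is to compute both products $z^+z^-$ and $z^-z^+$ as elements $\Delta(p)\,X\,\Delta(p)$ modulo $J$. Here $X$ should be a short polynomial in $e_1,f_1,h_1,h_2$ with coefficients in $\mathbb C(D)$. We then eliminate the non-central pieces using the Casimirs $T_1,T_2$ and the sandwiched identity \eqref{eq:h1h2}.

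Two simplification rules drive the computation. First, $\Delta(p)\equiv 1$ modulo $J$ on the right, so a trailing projector may be dropped or inserted freely. Second, $\Delta(p)F_{\mathrm d}=0$ on the left, by Proposition~\ref{prop:projector-identities}.

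For $z^-z^+=\Delta(p)f_1\Delta(p)e_1\Delta(p)$, expand the middle projector via \eqref{eq:p} as $\sum_k F_{\mathrm d}^kE_{\mathrm d}^k a_k(D)$. Since $[f_1,F_{\mathrm d}]=0$, every term with $k\ge1$ gives $\Delta(p)F_{\mathrm d}^k\cdots=0$. Hence $z^-z^+=\Delta(p)f_1e_1\Delta(p)$; alternatively, this follows because $E_{\mathrm d}e_1\in J$.

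For $z^+z^-=\Delta(p)e_1\Delta(p)f_1$ the expansion is still finite, and the cutoff has two sources.
\begin{itemize}[leftmargin=*]
\item On the left, move $e_1$ past $F_{\mathrm d}^k$ using $[e_1,F_{\mathrm d}]=h_1$, $[h_1,F_{\mathrm d}]=-2f_1$ and $[f_1,F_{\mathrm d}]=0$. This gives $\Delta(p)e_1F_{\mathrm d}=\Delta(p)h_1$, $\Delta(p)e_1F_{\mathrm d}^2=-2\Delta(p)f_1$, and zero for $k\ge3$.
\item On the right, $E_{\mathrm d}f_1\equiv h_1$ and $E_{\mathrm d}^2f_1\equiv-2e_1$ modulo $J$, while $E_{\mathrm d}^3f_1\in J$.
\end{itemize}
Keeping track of the shifts $q(D)x=xq(D+\lambda)$, this yields
\[
z^+z^-=\Delta(p)\Bigl[e_1f_1-h_1\tfrac{1}{D+2}h_1+2f_1e_1\tfrac{1}{(D+2)(D+3)}\Bigr]\Delta(p),
\]
up to the exact placement of the rational factors, which must be checked carefully. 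Using $e_1f_1=f_1e_1+h_1$, the bracket becomes a combination of $f_1e_1$, $h_1^2$ and $h_1$ with coefficients in $\mathbb C(D)$.

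Next, express everything through $\Delta(p)f_1e_1\Delta(p)=z^-z^+$. We have $\Delta(p)h_1^2\Delta(p)=\Delta(p)\bigl(T_1-2h_1-4f_1e_1\bigr)\Delta(p)$. For the linear and mixed terms, write $h_1=\tfrac12\bigl(D+(h_1-h_2)\bigr)$ and eliminate $h_1h_2$ with \eqref{eq:h1h2}, which trades $2h_1h_2$ for $D(D+2)-T_1-T_2+8f_1e_1$. The $h_2^2$ contributions are handled by $T_2$ together with the relation $f_2e_2\equiv -f_1e_2-\dots$, obtained from $E_{\mathrm d}\in J$ and $\Delta(p)F_{\mathrm d}=0$. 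Since $D$ commutes with $\Delta(p)$, all $D$-dependence can be pulled to the right. The goal is to collect the result into $z^-z^+\,\frac{(D+2)(D+3)}{D(D+1)}+\frac{D(D+2)-T_1-T_2}{2D}$.

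The main obstacle is the cancellation of the terms linear in $h_1-h_2$. Their sandwich $\Delta(p)(h_1-h_2)\Delta(p)$ is not a priori a function of $D$, $T_1$ and $T_2$, so all such contributions must cancel exactly in the combination above, and the shifted denominators must conspire to produce $\frac{(D+2)(D+3)}{D(D+1)}$. If the bookkeeping becomes unwieldy, I would fall back on a representation-theoretic check. Both sides lie in the $D$-weight-zero part of $Z(\mathfrak g,\mathfrak k)$, which is spanned over $\mathbb C(D)[T_1,T_2]$ by $1$ and $z^-z^+$. The identity can therefore be tested on $s_{N-2r}$ in $L_n\otimes L_m$ for all $n,m,r$, using Proposition~\ref{prop:projected-operators-action} and the value of $\alpha_r$ from the first nontrivial case $s_{N-2}$. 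Since polynomial identities in $(n,m,r)$ that hold on infinitely many points determine the rational coefficients, this check would also establish the relation.
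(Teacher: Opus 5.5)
Your strategy is the paper's: sandwich both products, truncate the middle projector via $\Delta(p)F_{\mathrm d}=0$ and $E_{\mathrm d}^3f_1\in J$, then eliminate with \eqref{eq:h1h2}. Your individual commutator identities are also correct. However, the assembly is wrong and the decisive step is missing.

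In the $k=1$ term the factor $\frac1{D+2}$ stands to the left of $f_1$, so $E_{\mathrm d}\frac1{D+2}f_1\Delta(p)=h_1\Delta(p)\frac1D$. Likewise the $k=2$ factor becomes $\frac1{2D(D+1)}$. Hence
\[
z^+z^-=\Delta(p)\Bigl[e_1f_1-\tfrac1D h_1^2+\tfrac{2}{D(D+1)}f_1e_1\Bigr]\Delta(p).
\]
Since $h_1^2$ and $f_1e_1$ have weight zero, no placement of your factors $\frac1{D+2}$ and $\frac1{(D+2)(D+3)}$ reproduces this.

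The correct denominator is exactly what makes the elimination work. After $e_1f_1=f_1e_1+h_1$, one has $h_1-\frac1Dh_1^2=\frac1Dh_1(D-h_1)=\frac1Dh_1h_2$, so every term linear in $h_1$ disappears at once. Then \eqref{eq:h1h2} gives $\Delta(p)h_1h_2\Delta(p)=\frac12\bigl(D(D+2)-T_1-T_2\bigr)+4z^-z^+$, and $1+\frac{2}{D(D+1)}+\frac4D=\frac{(D+2)(D+3)}{D(D+1)}$. With your $\frac1{D+2}$ this cancellation does not occur.

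Your diagnosis of the obstacle is also mistaken. Since $\Delta(p)f_1e_1\Delta(p)=\Delta(p)f_2e_2\Delta(p)$, one gets $\Delta(p)(h_1-h_2)\Delta(p)=\frac{T_1-T_2}{D+2}$, which is a function of $D,T_1,T_2$. So even the route through $h_1^2=T_1-2h_1-4f_1e_1$ closes, but only by evaluating this sandwich, not by expecting it to cancel.

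The fallback check does not repair this, for three reasons.
\begin{enumerate}
\item The claim that the weight-zero part of $Z(\mathfrak g,\mathfrak k)$ is spanned over $\mathbb C(D)[T_1,T_2]$ by $1$ and $z^-z^+$ is an unproved PBW-type statement.
\item You would also need the localized elements to act on $V^+$ (the denominators $D$, $D+1$ vanish at small weights), and these modules to separate elements of $\mathbb C(D)[T_1,T_2]$.
\item Most seriously, the $\alpha_r$ are not available. On $s_{N-2r}$ the relation \eqref{eq:mzrat} is just a recursion expressing $\alpha_{r+1}$ through $\alpha_r$, so testing it requires every $\alpha_r$ independently. The vector $s_{N-2}$ gives only $\alpha_1=nm/N$, which checks the case $r=0$. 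Those points $(D,T_1,T_2)=(n+m,n(n+2),m(m+2))$ lie on an algebraic surface, so they cannot determine a rational identity.
\end{enumerate}
The general $\alpha_r$ in Theorem~\ref{thm:smith-tensor-module} is derived from this very proposition, so invoking it would be circular. You would need an independent computation, for instance of the ratios $\lVert s_{N-2r}\rVert^2/\lVert s_{N-2r+2}\rVert^2$ from the explicit formula in Appendix~\ref{app:explicit}.
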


\begin{proof}
The relations involving $D$ were established in
\eqref{eq:mz-weight-relations}. We prove the quadratic
relation. In this proof write $p=\Delta(p)$, and keep
all rational functions of $D$ on the right.

Since $E_{\mathrm d}e_1p=0$, we have $pe_1p=e_1p$.
Consequently,
\begin{equation}
\label{eq:mz-minus-plus}
z^-z^+=pf_1pe_1p=pf_1e_1p.
\end{equation}
To calculate the product in the opposite order, note that
\[
E_{\mathrm d}f_1p=h_1p,\qquad
E_{\mathrm d}^{\,2}f_1p=-2e_1p,\qquad
E_{\mathrm d}^{\,3}f_1p=0.
\]
Thus only the first three terms of the extremal projector
contribute to $z^+z^-$, that is,  
\begin{align}
z^+z^-&=pe_1pf_1p
=pe_1\left[1-F_{\rm d}E_{\rm d}\frac{1}{D+2}
+F_{\rm d}^2E_{\rm d}^2\frac{1}{2(D+2)(D+3)}\right]f_1p
\notag \\
&=pe_1f_1p
-pe_1F_{\rm d}E_{\rm d}f_1p \frac{1}{D}
+pe_1F_{\rm d}^2E_{\rm d}^2f_1p\frac{1}{2D(D+1)}. 
\end{align}
By the properties $E_{\rm d}p=pF_{\rm d}=0$, 
\begin{align}
z^+z^-
&=pe_1f_1p
-p[e_1,F_{\rm d}][E_{\rm d},f_1]p \frac{1}{D}
+p[e_1,F_{\rm d}^2][E_{\rm d}^2,f_1]p\frac{1}{2D(D+1)}
\notag \\
&=pf_1e_1p+ph_1p-p\frac{h_1^2}{D}p+2pf_1e_1p\frac{1}{D(D+1)}
\notag \\
&=pf_1e_1p\left[1+\frac{2}{D(D+1)}\right]+p\frac{h_1h_2}{D}p. 
\end{align}
Together with \eqref{eq:mz-minus-plus}, we have 
\begin{align}
z^+z^-
&=z^-z^+ \left[1+\frac{2}{D(D+1)}\right]+p\frac{h_1h_2}{D}p. 
\end{align}
The relation \eqref{eq:h1h2} gives  
\[
ph_1h_2p=\frac{1}{2} p\left[D(D+2)-T_1-T_2 \right]p
+4z^-z^+. 
\]
Substituting this for the above expression, 
we obtain
\begin{align}
z^+z^-
&=z^-z^+ \left[1+\frac{2}{D(D+1)}+\frac{4}{D}\right]+
\frac{1}{2D} \left[D(D+2)-T_1-T_2 \right]p. 
\notag\\
&=z^-z^+ \frac{(D+2)(D+3)}{D(D+1)}+
\frac{1}{2D} \left[D(D+2)-T_1-T_2 \right]p. 
\end{align}
Since the rightmost $p$ represents the identity in 
$Z(\mathfrak g,\mathfrak k)$, this proves 
\eqref{eq:mzrat}.
\end{proof}

The relation \eqref{eq:mzrat}
is the rank-one diagonal reduction relation, in the normalization
used here; compare the explicit $DR(\mathfrak{sl}_2)$ relations of
Khoroshkin--Ogievetsky \cite{KO}.

By evaluating the diagonal Casimir operator on $\Delta(p)$, 
\[
T_{12}\De(p)=\frac12(D(D+2)-T_1-T_2)\De(p), 
\]
the relation is also expressed  as 
\begin{equation}
z^+z^-
=
z^-z^+\frac{(D+2)(D+3)}{D(D+1)}
+
\frac{T_{12}}{D}.
\end{equation}

\section{From the MZ algebra to a cubic Smith algebra}
\label{sec:smith}

The quadratic relation of Section~\ref{sec:mz} contains
a rational coefficient multiplying $z^-z^+$. We seek a
rescaling of $z^\pm$ that absorbs this coefficient and
turns the relation into an ordinary commutation relation.
We first give a general sufficient condition for such a
rescaling, then apply it to the diagonal MZ algebra.
In this case the resulting commutator is cubic and
defines a Smith algebra.

\subsection{A rescaling criterion for commutator relations}
\label{subsec:linearization}

We begin with a sufficient condition for removing the
rational product term from a quadratic relation in the MZ algebra.

\begin{lemma}
\label{lem:linearization}
Suppose that an associative $\mathbb C$-algebra contains
the rational function field $\mathbb C(D)$ and elements
$D, z^+,z^-$ satisfying 
\[
[D,z^+]=2z^+,
\qquad
[D,z^-]=-2z^-,
\qquad
z^+z^-=z^-z^+Q(D)+R(D),
\]
where $Q(D)$ and $R(D)$ commute with $D$.
Assume that a nonzero rational function $b(D)$ satisfies
\begin{equation}
\label{eq:linearization-condition}
Q(D)=\frac{b(D+2)}{b(D)}.
\end{equation}
For any nonzero rational function $a(D)$, set
\begin{equation}
\label{eq:general-rescaling}
E=a(D)z^+,\qquad
F=z^-\frac{b(D)}{a(D)}.
\end{equation}
Then
\[
[D,E]=2E,\qquad [D,F]=-2F,
\qquad
[E,F]=b(D)R(D).
\]
\end{lemma}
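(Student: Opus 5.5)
The proof is a direct computation. The only tool needed is the commutation rule for rational functions of $D$ past weight vectors: if $[D,x]=\lambda x$, then $q(D)x=xq(D+\lambda)$ for $q\in\mathbb C(D)$.

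\emph{Weight relations.} These are immediate, since $a(D)$ and $b(D)/a(D)$ commute with $D$. Thus
\[
[D,E]=a(D)[D,z^+]=2E,
\qquad
[D,F]=[D,z^-]\,\frac{b(D)}{a(D)}=-2F.
\]

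\emph{The product $EF$.} Here the factor $b(D)/a(D)$ is already on the right, and the two factors of $a$ meet in the middle. So
\[
EF=a(D)\,z^+z^-\,\frac{b(D)}{a(D)}
=a(D)\bigl(z^-z^+Q(D)+R(D)\bigr)\frac{b(D)}{a(D)}.
\]
Move $a(D)$ to the right through $z^-z^+$. This is a weight-zero product, so $a(D)$ commutes with it: concretely, $a(D)z^-=z^-a(D-2)$ and $a(D-2)z^+=z^+a(D)$. Since $Q$, $R$, $a$ and $b$ all lie in the commutative field $\mathbb C(D)$, we get
\[
EF=z^-z^+\,Q(D)b(D)+R(D)b(D).
\]

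\emph{The product $FE$.} Here we must commute $b(D)/a(D)$ past $z^+$, which has weight $+2$. Writing $z^-\,\frac{b(D)}{a(D)}\,a(D)\,z^+$ and using $q(D)z^+=z^+q(D+2)$, we get
\[
FE=z^-z^+\,\frac{b(D+2)}{a(D+2)}\,a(D+2)=z^-z^+\,b(D+2).
\]

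\emph{Conclusion.} The hypothesis \eqref{eq:linearization-condition} gives $Q(D)b(D)=b(D+2)$, so the $z^-z^+$ terms cancel and $[E,F]=b(D)R(D)$.

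The only point requiring care is the shift direction in $FE$: one must check that $a$ cancels exactly (it appears as $a(D+2)$ in both numerator and denominator), and that $b$ acquires the shift $D+2$ rather than $D-2$. That is precisely why the condition is stated as $Q(D)=b(D+2)/b(D)$.

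Everything takes place in the localized algebra, so all the rational functions involved are invertible. No evaluation issues arise at this abstract level, and the hypothesis that $Q$ and $R$ commute with $D$ is exactly what allows them to be rearranged freely among the other functions of $D$.
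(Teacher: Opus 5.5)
Your proof is correct and takes essentially the paper's approach: the paper multiplies the quadratic relation on the right by $b(D)$, uses $Q(D)b(D)=b(D+2)$, and then recognizes the two sides as $EF$ and $FE+b(D)R(D)$ via the same weight-shift rules you use. One small wording point is that $Q$ and $R$ are only assumed to commute with $D$ (in the application $R$ involves the central $T_1,T_2$), so they need not lie in $\mathbb C(D)$; however, commuting with $D$ already implies commuting with all of $\mathbb C(D)$, which is all your rearrangements require.
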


\begin{proof}
The relations with $D$ follow from those for $z^\pm$.
Multiplying the quadratic relation on the right by $b(D)$, 
we obtain
\begin{align}
z^+z^-b(D)=z^-z^+b(D+2)+b(D)R(D). 
\notag 
\end{align}
Using the weight relation for $z^+$ and noting 
$z^+z^-$ has weight zero, it becomes  
\begin{align}
a(D)z^+\cdot z^-\frac{b(D)}{a(D)}=z^-\frac{b(D)}{a(D)}\cdot a(D)z^++b(D)R(D).
\notag 
\end{align}
This is the desired relation
\[
EF=FE+b(D)R(D).
\]
\end{proof}

The function $b(D)$ removes the coefficient of $z^-z^+$
from the commutator. The choice of $a(D)$ does not affect
the resulting relation; it records a freedom in the
normalization of $E$ and $F$.

\subsection{A realization of the cubic Smith algebra}
\label{subsec:cubic-smith}

We now apply Lemma~\ref{lem:linearization} to the
quadratic relation \eqref{eq:mzrat}. In its notation,
\[
Q(D)=\frac{(D+2)(D+3)}{D(D+1)},
\qquad
R(D)=\frac{D(D+2)-T_1-T_2}{2D}.
\]
The coefficient $Q(D)$ has the required form:
\[
Q(D)=\frac{b(D+2)}{b(D)}
\qquad\text{for}\qquad
b(D)=4D(D+1).
\]
The constant factor $4$ does not affect this ratio.
It is chosen so that the resulting commutator has the
normalization
\[
b(D)R(D)
=
2(D+1)\bigl(D(D+2)-T_1-T_2\bigr).
\]
Thus the diagonal MZ relation satisfies the rescaling
criterion. It remains to choose the gauge function $a(D)$
and identify the resulting algebra.

We fix the gauge $a(D)=1$ and define
\begin{equation}
\label{eq:smith-generators}
E:=z^+,\qquad
F:=4z^-D(D+1),\qquad
H:=D+1.
\end{equation}
Set
\[
c:=T_1+T_2+1.
\]
Since $T_1$ and $T_2$ are central in
$Z(\mathfrak g,\mathfrak k)$, so is $c$.

Smith introduced a class of algebras with generators
$E,F,H$ satisfying $[H,E]=2E$, $[H,F]=-2F$, and
$[E,F]=P(H)$ for a polynomial $P\in \mathbb{C}[H]$ in \cite{S}. 
When $P$ is cubic, 
relations of this form are also referred to
as {\it Higgs-algebra} relations in the physics literature
\cite{Hig,Zhed}. We use the term \emph{cubic Smith
relations} for the normalization obtained below.

\begin{theorem}
\label{thm:cubic}
The elements $E,F,H$ of
$Z(\mathfrak g,\mathfrak k)$ 
given in \eqref{eq:smith-generators}
satisfy
\begin{equation}
\label{eq:cubic}
[H,E]=2E,\qquad
[H,F]=-2F,\qquad
[E,F]=2(H^3-cH).
\end{equation}
Thus they define a realization of the cubic Smith
relations inside the diagonal MZ algebra.
\end{theorem}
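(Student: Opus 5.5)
The plan is to obtain the theorem as a direct application of Lemma~\ref{lem:linearization} to the quadratic relation \eqref{eq:mzrat} of Proposition~\ref{prop:mz-relations}. We take the gauge $a(D)=1$ and $b(D)=4D(D+1)$, and then rewrite everything in terms of $H=D+1$.

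\textbf{Step 1: weight relations.} Since $H=D+1$ differs from $D$ by a scalar, $[H,\cdot]=[D,\cdot]$. The relations $[H,E]=2E$ follows at once from \eqref{eq:mz-weight-relations}. For $F=4z^-D(D+1)$, the factor $D(D+1)$ commutes with $D$, so $[D,F]=[D,z^-]\,4D(D+1)=-2F$.

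\textbf{Step 2: check the hypothesis of the lemma.} We must verify $Q(D)=b(D+2)/b(D)$. Indeed
\[
\frac{4(D+2)(D+3)}{4D(D+1)}=Q(D).
\]
We must also note that $Q$ and $R$ commute with $D$. They are rational in $D$ with coefficients involving the central elements $T_1,T_2$. The lemma then gives
\[
[E,F]=b(D)R(D)=2(D+1)\bigl(D(D+2)-T_1-T_2\bigr).
\]
Here the lemma is applied with $a=1$, so that $F=z^-b(D)$.

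\textbf{Step 3: rewrite in terms of $H$.} Substituting $D=H-1$ gives $D(D+2)=H^2-1$. Hence
\[
[E,F]=2H\bigl(H^2-1-T_1-T_2\bigr)=2(H^3-cH),
\]
with $c=T_1+T_2+1$.

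\textbf{Expected obstacle.} The only delicate point is legitimacy inside $Z(\mathfrak g,\mathfrak k)$. One must confirm that $F=4z^-D(D+1)$ is again an element of the MZ algebra. Right multiplication by a polynomial in $D$ preserves $\operatorname{Norm}(J)$, because $D$ normalizes $J$: $E_{\mathrm d}D=(D-2)E_{\mathrm d}$. One must also confirm that the manipulations in the lemma, which move rational functions of $D$ past $z^\pm$ with shifts, are valid in $\operatorname{Norm}(J)/J$. This holds because $U'(\mathfrak g)$ contains $\mathbb C(D)$ and $J$ is stable under right multiplication by $\mathbb C(D)$.

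Finally, the centrality of $T_1,T_2$, which is stated before Proposition~\ref{prop:mz-relations}, guarantees that $R(D)$ commutes with $D$. It also guarantees that $c$ is central, as the statement requires.
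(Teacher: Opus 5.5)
Your proposal is correct and is essentially the paper's own proof: apply Lemma~\ref{lem:linearization} to \eqref{eq:mzrat} with $a(D)=1$ and $b(D)=4D(D+1)$, then substitute $H=D+1$ and $D(D+2)=H^2-1$. Your extra checks, that $F=4z^-D(D+1)$ lies in $\operatorname{Norm}(J)$ and that $J$ is stable under right multiplication by $\mathbb C(D)$ (via $E_{\mathrm d}q(D)=q(D-2)E_{\mathrm d}$), are accurate and make explicit points the paper leaves implicit.
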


\begin{proof}
The first two relations follow from
\eqref{eq:mz-weight-relations}. Lemma~\ref{lem:linearization}
gives
\[
[E,F]
=b(D)R(D)
=2(D+1)\bigl(D(D+2)-T_1-T_2\bigr).
\]
Since $H=D+1$ and $D(D+2)=H^2-1$, this becomes
\[
[E,F]=2H\bigl(H^2-(T_1+T_2+1)\bigr)
     =2(H^3-cH).
\]
This completes the proof. 
\end{proof}

\begin{remark}
The choice $a(D)=1$ fixes a convenient normalization.
More generally, replacing $E$ and $F$ by
\[
E_a=a(D)z^+,\qquad
F_a=z^-\frac{4D(D+1)}{a(D)}
\]
for a nonzero rational function $a(D)$ leaves $E_aF_a$
and $F_aE_a$ unchanged. Hence the Smith relations and
the Smith Casimir $\Omega$, which we discuss later, are independent of this choice.
The function $a(D)$ only redistributes the coefficients
between the raising and lowering operators on a weight
module.
\end{remark}

Higgs algebras have also appeared in the setting of
{\it transvector algebras} associated with harmonic analysis
\cite{DER,Eel}. In particular, Eelbode identifies Higgs
algebras within a transvector algebra related to the
harmonic Fischer decomposition \cite{Eel}. The reduction
pair and representation considered here are different:
our construction uses the diagonal embedding
$\mathfrak{sl}_2\subset\mathfrak{sl}_2\oplus\mathfrak{sl}_2$
and acts on tensor-product singular vectors.

\subsection{The Smith Casimir and central parameters}
\label{subsec:smith-casimir}

We first recall the abstract construction of the Smith algebra
and its Casimir element \cite{S}. 
For a polynomial $P\in \mathbb C[K]$, let
$\mathcal S(P)$ be the algebra generated by $X,Y,K$
subject to
\[
[K,X]=2X,\qquad [K,Y]=-2Y,\qquad [X,Y]=P(K).
\]
Choose a polynomial $\Phi\in \mathbb C[K]$  satisfying
\begin{equation}
\label{eq:smith-potential-difference}
\Phi(K)-\Phi(K-2)=P(K), 
\end{equation}
which is  called the {\it Smith potential}. 
Then
\begin{equation}
\label{eq:abstract-smith-casimir}
\Omega_{\mathcal S}
:=YX+\Phi(K)
 =XY+\Phi(K-2)
\end{equation}
is central in $\mathcal S(P)$ and defines 
the Casimir element in  $\mathcal S(P)$. 
The equality follows
from $XY-YX=P(K)$, and the difference equation shows
that $\Omega_{\mathcal S}$ commutes with $X$ and $Y$.
For instance, 
\begin{align}
X\Omega_{\mathcal S}
&=X(YX+\Phi(K))=XYX+\Phi(K-2)X
=(XY+\Phi(K-2))X=\Omega_{\mathcal S}X. 
\notag 
\end{align}
The potential, and hence this choice of Casimir, 
is determined up to an additive constant.

In the cubic case considered here, the polynomial is
$P(K)=2(K^3-cK)$, with $c$ a central parameter. We choose
\begin{equation}
\label{eq:potential}
\Phi(K)
=
\frac14K(K+2)\bigl(K(K+2)-2c\bigr).
\end{equation}
Theorem~\ref{thm:cubic} gives a homomorphism from this
cubic Smith algebra to the MZ algebra 
$Z(\mathfrak g,\mathfrak k)$ by
\[
X\longmapsto E,\qquad
Y\longmapsto F,\qquad
K\longmapsto H,\qquad
c\longmapsto T_1+T_2+1.
\]
We denote the image of $\Omega_{\mathcal S}$ by
$\Omega$. Thus, in our MZ realization,
\[
\Omega=FE+\Phi(H)=EF+\Phi(H-2).
\]

From now on, we use $c$ also for its image
$T_1+T_2+1$ in $Z(\mathfrak g,\mathfrak k)$.

\begin{proposition}
\label{prop:smith-central-parameters}
In the diagonal MZ realization, the Smith Casimir
$\Omega=FE+\Phi(H)$ is given by
\begin{equation}
\label{eq:centralmap}
c=T_1+T_2+1,\qquad
\Omega=
\frac14\bigl(-(T_1-T_2)^2
             +2(T_1+T_2)+3\bigr).
\end{equation}
Thus both Smith central parameters are expressed in
terms of the Casimir elements of the two tensor factors.
\end{proposition}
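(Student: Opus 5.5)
The plan is to reduce the identity for $\Omega$ to an identity of rational functions in $D,T_1,T_2$, and to check that identity on the tensor-product modules $L_n\otimes L_m$, where the right-hand side is easy to evaluate. The value of $c$ is just the definition $c=T_1+T_2+1$ used in Theorem~\ref{thm:cubic}, so only the formula for $\Omega$ needs proof.

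\textbf{Step 1: write $FE$ as a function of $D,T_1,T_2$.} From \eqref{eq:smith-generators} and the weight shift $q(D)z^+=z^+q(D+2)$,
\[
FE=4z^-D(D+1)z^+=4\,z^-z^+\,(D+2)(D+3)=4\,pf_1e_1p\,(D+2)(D+3).
\]
I would first eliminate $f_1e_1$ using $4f_1e_1=T_1-h_1^2-2h_1$. Next I would substitute $h_2=D-h_1$ into \eqref{eq:h1h2}. Since $p$ commutes with $D$, this gives
\[
p\,h_1\,p=\frac{D(D+2)+T_1-T_2}{2(D+2)}\,p.
\]
The remaining term is $p\,h_1^2\,p$. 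I would compute it the same way, sandwiching $\Delta(T)^2$, or equivalently $T_{12}$ times $D(D+2)$, between copies of $p$ and expanding modulo $J$ with $E_{\mathrm d}p=pF_{\mathrm d}=0$. This yields one more linear relation among $p\,h_1^2p$, $p\,f_1e_1p$ and $p\,h_1p$.

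\textbf{Main obstacle and fallback.} Computing $p\,h_1^2p$ is the step I expect to be hardest, since the bookkeeping is heavy. If the direct computation becomes unwieldy, I would use the representations instead. On $(L_n\otimes L_m)^+$, Proposition~\ref{prop:projected-operators-action} shows that $z^-z^+$ acts on $s_{N-2r}$ by $\alpha_r$, and $\alpha_r=\lVert s_{N-2r}\rVert^2/\lVert s_{N-2r+2}\rVert^2$. Taking these norms from the explicit formula in Appendix~\ref{app:explicit}, or from the Clebsch--Gordan norms, gives $\alpha_r$ as an explicit rational function of $n,m,r$. This function should be the specialization of a single rational function of $D,T_1,T_2$. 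In either route, $\Omega=FE+\Phi(H)$ becomes an explicit rational expression in $D,T_1,T_2$.

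\textbf{Step 2: evaluate and extend.} Because $\Omega$ is central, the expression from Step~1 must be independent of $D$. It is therefore enough to evaluate it where $E$ vanishes, namely on $s_N=1_n\otimes1_m$. There $FE$ acts by $0$, $H=N+1$, $T_1=n(n+2)$ and $T_2=m(m+2)$, so $\Omega=\Phi(N+1)$ with $c=n(n+2)+m(m+2)+1$. Put $K(K+2)=(N+1)(N+3)$ into \eqref{eq:potential} and compare with $\frac14\bigl(-(T_1-T_2)^2+2(T_1+T_2)+3\bigr)$. This is a finite polynomial identity in $n,m$; the factorization $T_1-T_2=(n-m)(n+m+2)$ makes it routine to check.

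Finally, the triples $(D,T_1,T_2)=(N-2r,\,n(n+2),\,m(m+2))$ with $n\ge m\ge r\ge0$ are Zariski dense in $\mathbb C^3$. Hence the rational identity, verified on all these points, holds identically in $Z(\mathfrak g,\mathfrak k)$, which proves \eqref{eq:centralmap}.
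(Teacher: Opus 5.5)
\textbf{The gap is the evaluation of $ph_1^2p$ in Step~1.} Your setup is correct and agrees with the paper: $FE=4\,pf_1e_1p\,(D+2)(D+3)$, and $ph_1p=\frac{D(D+2)+T_1-T_2}{2(D+2)}$ is the paper's $z^0$.

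Sandwiching $\Delta(T)^2$ or $T_{12}D(D+2)$ gives no new relation. The element $T_{12}=\frac12(\Delta(T)-T_1-T_2)$ commutes with $E_{\mathrm d},F_{\mathrm d},D$, hence with $p$. So $pT_{12}^kp=\bigl(\tfrac12(D(D+2)-T_1-T_2)\bigr)^k$, and $p\,T_{12}D(D+2)\,p$ is just $D(D+2)$ times \eqref{eq:h1h2}. Expanding $T_{12}^2$ in PBW form instead only creates new unknowns such as $ph_1^3p$ and $ph_1^4p$.

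Moreover, after $h_2=D-h_1$ and $pf_2e_2p=pf_1e_1p$, relation \eqref{eq:h1h2} is exactly the sum of the sandwiched $T_1$- and $T_2$-relations. So for the three unknowns $ph_1p$, $ph_1^2p$, $pf_1e_1p$ you have only two independent equations. One fixes $ph_1p$; the other merely links $ph_1^2p$ to $pf_1e_1p$.

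The missing equation is the one the paper extracts from the projector series applied to $h_1p$:
\begin{itemize}
\item Write $ph_1^2p=(ph_1p)^2+ph_1(1-p)h_1p$.
\item Since $E_{\mathrm d}h_1p=-2e_1p$ and $E_{\mathrm d}^2h_1p=0$, the series terminates, giving $(1-p)h_1p=-2F_{\mathrm d}e_1\frac1{D+2}p$.
\item Then $pF_{\mathrm d}=0$ and $[h_1,F_{\mathrm d}]=-2f_1$ give $ph_1^2p=(ph_1p)^2+\frac4{D+2}\,pf_1e_1p$.
\end{itemize}
Together with $T_1=ph_1^2p+2ph_1p+4pf_1e_1p$, this determines $pf_1e_1p$. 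Then $FE+\Phi(D+1)$ simplifies directly, since all $D$-dependence cancels. Your centrality argument in Step~2, which would also need $z^+$ to be a non-zero-divisor, becomes unnecessary.

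\textbf{Your fallback does not close this gap.} The module computation itself is right. Indeed $\alpha_r$ is the value at $(D,T_1,T_2)=(N-2r,\,n(n+2),\,m(m+2))$ of $\frac{-(D+2)^4+2(T_1+T_2+2)(D+2)^2-(T_1-T_2)^2}{16(D+2)(D+3)}$.

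However, matching eigenvalues on every $(L_n\otimes L_m)^+$ gives an identity in $Z(\mathfrak g,\mathfrak k)$ only if you have one of two further facts. Either you know beforehand that $z^-z^+$ lies in $\mathbb C(D)[T_1,T_2]$, which is exactly what the direct computation was supposed to establish. Or you know that these modules separate elements of $Z(\mathfrak g,\mathfrak k)$, which you do not prove and which is delicate because of the localization in $D$. Your Zariski-density step is valid for rational functions of $(D,T_1,T_2)$, not for arbitrary elements of the MZ algebra.

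As written, the fallback proves only the module-level statement that $\Omega$ acts on each $(L_n\otimes L_m)^+$ by $\omega_{n,m}$, as in Theorem~\ref{thm:smith-tensor-module}. It does not prove the identity \eqref{eq:centralmap} in $Z(\mathfrak g,\mathfrak k)$.
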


\begin{proof}
The formula $c=T_1+T_2+1$ was established in
Theorem~\ref{thm:cubic}. We identify $\Omega$.
We use the following abbreviations:
\[
p=\Delta(p),\qquad
S=T_1+T_2,\qquad
%Z=z^-z^+,\qquad
A=T_1-T_2.
\]
All calculations below take place in the diagonal MZ
algebra. In particular, $p$ represents its identity.

First, \eqref{eq:mz-minus-plus} and
\eqref{eq:smith-generators} give
\begin{equation}
\label{eq:omega-first}
z^-z^+=pf_1e_1p,\qquad
FE=4z^-z^+(D+2)(D+3),\qquad
\Omega=FE+\Phi(D+1).
\end{equation}
We shall express $z^-z^+$ in terms of $D,T_1,T_2$.
Since $E_{\mathrm d}p=0$ and $pF_{\mathrm d}=0$,
\[
pf_2e_2p
=p(F_{\mathrm d}-f_1)(E_{\mathrm d}-e_1)p
=pf_1e_1p=z^-z^+.
\]
Taking the difference of the two factor Casimirs,
and using $h_2=D-h_1$, we obtain
\[
A
=p\bigl(h_1^2-h_2^2+2h_1-2h_2\bigr)p
=(D+2)(2ph_1p-D).
\]
Thus, introducing the projected $h_1$ generator 
by $z^0=ph_1p$, we have 
\begin{equation}
\label{eq:projected-h1}
z^0=\frac{1}{2}\left[D+\frac{A}{D+2}\right].
\end{equation}

Next we compare $ph_1^2p$ with $(z^0)^2$.
By the relation 
\begin{align}
ph_1^2p
&=ph_1(p+(1-p))h_1p 
=(z^0)^2+ph_1(1-p)h_1p,  
\end{align}
it is sufficient to evaluate $(1-p)h_1p$. 
The identities
\[
E_{\mathrm d}h_1p=-2e_1p,\qquad
E_{\mathrm d}^{\,2}h_1p=0
\]
show that the extremal-projector series terminates
after its linear term on $h_1p$. Hence
\[
(1-p)h_1p
=-2F_{\mathrm d}e_1\frac1{D+2}p.
\]
Using $pF_{\mathrm d}=0$ and
$[h_1,F_{\mathrm d}]=-2f_1$, we find
\begin{align}
ph_1^2p
%&=ph_1ph_1p+ph_1(1-p)h_1p \notag\\
&=(z^0)^2
  -2ph_1F_{\mathrm d}e_1\frac1{D+2}p
 =(z^0)^2+\frac4{D+2}z^-z^+.
\label{eq:projected-h1-square}
\end{align}

The first factor Casimir now gives
\[
T_1=ph_1^2p+2z^0+4pf_1e_1p
=(z^0)^2+2z^0+\frac{4(D+3)}{D+2}z^-z^+.
\]
Solving for $z^-z^+$ yields
\begin{equation}
\label{eq:Z-in-casimirs}
z^-z^+=\frac{D+2}{4(D+3)}
   \bigl(T_1-z^0(z^0+2)\bigr).
\end{equation}
Substituting \eqref{eq:projected-h1} into this 
expression and setting $u=D+2$, we obtain
\begin{align}
FE&=4z^-z^+(D+2)(D+3)
\notag \\
&=(D+2)^2\bigl(T_1-z^0(z^0+2)\bigr)
\notag \\
&=\frac{1}{4}\left(-u^4+2(S+2)u^2-A^2\right).  
\label{eq:FE}
\end{align}

On the other hand, 
since $c=S+1$, 
our choice of the Smith potential $\Phi(H)$ 
in \eqref{eq:potential} with $H=D+1$ gives
\begin{align}
\Phi(D+1)&=\frac14(u^2-1)(u^2-2S-3)
\notag \\
&=\frac14\left(u^4-2(S+2)u^2+2S+3\right).
\label{eq:Phi}
\end{align}

Adding the potential \eqref{eq:Phi} to the 
expression for $FE$ in \eqref{eq:FE},
all terms involving $u$ cancel as expected. 
Therefore, the Smith Casimir is given by
\begin{align}
\Omega&=FE+\Phi(D+1)
\notag \\
&=\frac14\bigl(-A^2+2S+3\bigr)
=\frac14\bigl(-(T_1-T_2)^2+2(T_1+T_2)+3\bigr),
\notag
\end{align}
as claimed.
\end{proof}

\begin{remark}
\label{rem:potential-ambiguity}
The difference equation
\eqref{eq:smith-potential-difference} determines
$\Phi$ up to an additive element $q$ of the central
coefficient ring. Replacing $\Phi$ by $\Phi+q$ leaves
the Smith relations unchanged, but replaces the
associated Casimir by
\[
\Omega=FE+\Phi(H)
\quad\longmapsto\quad
\Omega+q.
\]
Thus the explicit formula for $\Omega$ in
Proposition~\ref{prop:smith-central-parameters}
corresponds to the particular choice of potential
in \eqref{eq:potential}.
\end{remark}

\begin{remark}
The pair $(T_1,T_2)$ therefore separates naturally into 
two pieces of Smith-algebra data $(c,\Omega)$: 
their sum determines the deformation parameter $c$, 
while their difference enters the Smith Casimir $\Omega$.
\end{remark}

\section{The Smith module arising from $\alg{sl}_2$ 
tensor products}
\label{sec:representation}

The preceding sections constructed cubic Smith generators
inside the diagonal MZ algebra.
We now study their action on the singular subspace of
$L_n\otimes L_m$.
Here the projected operators are understood using the
globally defined projection of Section~\ref{sec:projector}.
We consider the action of the Smith generators,
without requiring an action of the full localized
MZ algebra on this subspace.

In Section~\ref{sec:smith-module-action}, we describe their
action on the singular-vector basis and give small matrix
examples. In Section~\ref{sec:smith-shapovalov}, we compute
the Shapovalov form for cubic Smith highest-weight modules.
Its degeneracy and simplicity conditions allow us to
identify which finite-dimensional simple modules arise
from tensor products of finite-dimensional
$\mathfrak{sl}_2$-modules.

\subsection{The Smith module on singular vectors}
\label{sec:smith-module-action}

Let $n,m\in\mathbb Z_{\geq 0}$, and put
\[
N=n+m,\qquad q=\min\{n,m\}.
\]
For $0\leq r\leq q$, use the singular vectors
\[
s_{N-2r}
=\Delta(p)(f^r1_n\otimes 1_m)
=(z^-)^r(1_n\otimes 1_m).
\]

\begin{theorem}
\label{thm:smith-tensor-module}
The singular subspace $V^+=(L_n\otimes L_m)^+$ is an
irreducible Smith module of dimension $q+1$, with basis
\[
s_N,~s_{N-2},\cdots,~s_{N-2q}.
\]
The generators $E,F,H$ 
in \eqref{eq:smith-generators}
act by
\begin{align}
E s_{N-2r}
&=\frac{r(n-r+1)(m-r+1)(N-r+2)}{(N-2r+2)(N-2r+3)}
\,s_{N-2r+2}
&& (1\leq r\leq q), 
\notag \\
F s_{N-2r}
&=4(N-2r)(N-2r+1)s_{N-2r-2}
&& (0\leq r\leq q-1),
\notag \\
H s_{N-2r}&=(N-2r+1)s_{N-2r}
&& (0\leq r\leq q). 
\notag 
\end{align}
At the endpoints, $Es_N=0$ and $Fs_{|n-m|}=0$.
In particular, the highest weight with respect to $H$
is $N+1$.
The central elements $c$ and $\Omega$ respectively 
act on $V^+$ by the scalars
\begin{align}
c_{n,m}&=n(n+2)+m(m+2)+1, \notag \\
\omega_{n,m}
%&=-\frac14(N+1)(N+3)\bigl((n-m)^2-1\bigr).  
&=-\frac14(N+1)(N+3)(n-m+1)(n-m-1). 
\label{eq:smith-omega-nm}
\end{align}
\end{theorem}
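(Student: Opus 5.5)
Since the MZ realization of Theorem~\ref{thm:cubic} acts on $V^+$ through the globally defined projection $\Delta(p)_V$, I will compute the action of $E=z^+$, $F=4z^-D(D+1)$, $H=D+1$ on the basis $s_{N-2r}$ directly. By symmetry of the formulas in $n\leftrightarrow m$ (swapping the factors changes $s_{N-2r}$ only by a nonzero scalar, which the formulas below absorb consistently since $EF$ and $FE$ are what matter), I first assume $n\ge m$, so $q=m$. Then Proposition~\ref{prop:singular-basis} gives the basis, and $H s_{N-2r}=(N-2r+1)s_{N-2r}$ is immediate from $D s_{N-2r}=(N-2r)s_{N-2r}$. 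For $F$, rational functions of $D$ are on the right, so $F s_{N-2r}=4(N-2r)(N-2r+1)z^-s_{N-2r}=4(N-2r)(N-2r+1)s_{N-2r-2}$ by Proposition~\ref{prop:projected-operators-action}. The endpoint vanishings come from the same proposition.

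The main step is determining $\alpha_r$ in $E s_{N-2r}=\alpha_r s_{N-2r+2}$. I will use the Smith Casimir rather than an explicit inner product computation. On $V^+$, $T_1,T_2$ act by $n(n+2)$, $m(m+2)$, so $c$ acts by $c_{n,m}$ as stated, and by Proposition~\ref{prop:smith-central-parameters} $\Omega$ acts by $\frac14(-(T_1-T_2)^2+2(T_1+T_2)+3)$. With $T_1-T_2=(n-m)(N+2)$ and $2(T_1+T_2)+3=(N+1)(N+3)+(n-m)^2$, this gives
\[
-\tfrac14\bigl((n-m)^2(N+2)^2-(n-m)^2-(N+1)(N+3)\bigr)=-\tfrac14(N+1)(N+3)\bigl((n-m)^2-1\bigr),
\]
which is $\omega_{n,m}$. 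One subtlety I will address is that the MZ identities are in the localized algebra. On nonnegative weights $D(D+2)$-type denominators are nonzero, except possibly $D=0$ in \eqref{eq:projected-h1}-type expressions. I would remark that the relation $\Omega=EF+\Phi(H-2)$ is polynomial, and for safety verify it directly on $s_N$, where $E s_N=0$.

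Then I apply $\Omega=EF+\Phi(H-2)$ to $s_{N-2r+2}$:
\[
4(N-2r+2)(N-2r+3)\alpha_r=\omega_{n,m}-\Phi(N-2r+1).
\]
Setting $K=N-2r+1$ and $u=K+1$, we have $\Phi(K)=\frac14(u^2-1)(u^2-2c_{n,m}-1)$. The target is that $\omega_{n,m}-\Phi(K)$ factors as $4r(n-r+1)(m-r+1)(N-r+2)$. This is a degree-4 polynomial identity in $r$. I will check it by observing that both sides vanish at $r=0$ (since $Es_N=0$, and this is consistent), at $r=n+1$, $r=m+1$, and $r=N+2$, and that their leading coefficients agree: $4\cdot\frac{-1}{4}\cdot16=\ldots$, i.e.\ both equal $16r^4$ up to sign. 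Equivalently, the roots in $u$ are $u=\pm(N+2)$ and $u=\pm(n-m)$, which I will read off from $u^4-2(c+1)u^2+\ldots$ with $c+1=\frac12((N+2)^2+(n-m)^2)$. Dividing by $4(N-2r+2)(N-2r+3)$ gives the stated formula.

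The remaining step is irreducibility. For $1\le r\le q$, $\alpha_r\ne0$ since all of its factors are positive (this also agrees with $\alpha_r>0$ earlier). The coefficients of $F$ are nonzero except at the bottom, where $N-2r=0$ can only occur if $n=m=r$, which is the endpoint. Since $H$ has distinct eigenvalues, any submodule is spanned by basis vectors, and the chain connectivity forces it to be everything. The obstacle I expect is the polynomial factorization together with justifying the Casimir evaluation on $V^+$ despite the localization; as a fallback, I would verify $\alpha_r$ via $\alpha_r=\|s_{N-2r}\|^2/\|s_{N-2r+2}\|^2$ using Appendix~\ref{app:explicit}.
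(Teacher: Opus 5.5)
Your proposal is correct and is essentially the paper's proof. Your one variation is to obtain $\alpha_r$ from $\Omega=EF+\Phi(H-2)$, using the value $\omega_{n,m}$ from Proposition~\ref{prop:smith-central-parameters}; this is exactly the alternative recorded in the remark after the theorem, and the main proof's telescoping of $[E,F]=\Phi(H)-\Phi(H-2)$ from $\beta_0=0$ expresses the same constancy of $\Omega$, normalized at $s_N$.

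Some small repairs are needed. In the factorization, both leading coefficients in $r$ are $-4$, not $\pm16$. Matching roots in $r$ also misses the double root $r=n+1$ when $n=m$, so rely instead on your $u$-form $\omega_{n,m}-\Phi(K)=-\tfrac14\bigl(u^2-(N+2)^2\bigr)\bigl(u^2-(n-m)^2\bigr)$. Finally, the reduction to $n\geq m$ is unnecessary, since the argument runs verbatim for $n<m$; your justification via $EF$ and $FE$ alone would not determine the individual coefficients of $E$ and $F$ anyway.
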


\begin{proof}
The construction in Section~\ref{sec:projector} gives
nonzero singular vectors $s_{N-2r}$ for $0\leq r\leq q$.
Their weights are distinct, and the Clebsch--Gordan rule
shows that they form a basis of $V^+$.
The same argument applies when $n<m$, with
$q=\min(n,m)$.

Since $H=D+1$, its action follows from the weight of
$s_{N-2r}$.
The formula for $F=4z^-D(D+1)$ follows from
$z^-s_{N-2r}=s_{N-2r-2}$ for $r<q$.
The endpoint relations $Es_N=0$ and $Fs_{|n-m|}=0$
follow from the corresponding relations for $z^\pm$.
Moreover, since $c=T_1+T_2+1$, the central element $c$
acts on $V^+$ by the scalar
\[
c_{n,m}=n(n+2)+m(m+2)+1.
\]

We next determine the action of $E$.
Write
\[
Es_{N-2r}=\alpha_r s_{N-2r+2}
\qquad (1\leq r\leq q),
\]
and set $\alpha_0=0$, in accordance with $Es_N=0$.
For $1\leq r\leq q$, applying $E$ to
\[
Fs_{N-2r+2}
=4(N-2r+2)(N-2r+3)s_{N-2r}
\]
gives
\[
EFs_{N-2r+2}
=4(N-2r+2)(N-2r+3)\alpha_r s_{N-2r+2}.
\]
On the other hand, the cubic Smith relation
\eqref{eq:cubic} gives
\[
\begin{aligned}
EFs_{N-2r+2}
&=\bigl([E,F]+FE\bigr)s_{N-2r+2}\\
&=\Bigl(
  2(H^3-cH)
  +4(N-2r+4)(N-2r+5)\alpha_{r-1}
  \Bigr)s_{N-2r+2}\\
&=\Bigl(
  \Phi(H)-\Phi(H-2)
  +4(N-2r+4)(N-2r+5)\alpha_{r-1}
  \Bigr)s_{N-2r+2},
\end{aligned}
\]
where $\Phi$ is the Smith potential
\eqref{eq:potential}, evaluated at $c=c_{n,m}$.
Set
\[
\beta_r=4(N-2r+2)(N-2r+3)\alpha_r,
\qquad
\Phi_r=\Phi(N-2r+1).
\]
Comparing the two expressions and using the
$H$-weight of $s_{N-2r+2}$, we obtain
\[
\beta_r=\Phi_{r-1}-\Phi_r+\beta_{r-1}.
\]
Thus $\beta_r+\Phi_r$ is independent of $r$.
Since $\beta_0=0$, it follows that
\[
\alpha_r
=\frac{\Phi(N+1)-\Phi(N-2r+1)}
{4(N-2r+2)(N-2r+3)}.
\]
To factor the numerator, set
$u=N+2$ and $v=N-2r+2$.
By \eqref{eq:potential},
\[
\begin{aligned}
\Phi(N+1)-\Phi(N-2r+1)
&=\frac14(u^2-v^2)
  \bigl(u^2+v^2-2(c_{n,m}+1)\bigr)\\
&=4r(N-r+2)(n-r+1)(m-r+1).
\end{aligned}
\]
This proves the stated formula for the action of $E$.

It remains to determine the scalar $\omega_{n,m}$
by which $\Omega$ acts.
Proposition~\ref{prop:smith-central-parameters} gives
\[
4\omega_{n,m}
=-\bigl(n(n+2)-m(m+2)\bigr)^2
 +2n(n+2)+2m(m+2)+3.
\]
Setting $x=n+1$ and $y=m+1$, we obtain
\[
\begin{aligned}
4\omega_{n,m}
&=-(x^2-y^2)^2+2(x^2+y^2)-1\\
&=-\bigl((x+y)^2-1\bigr)
    \bigl((x-y)^2-1\bigr)\\
&=-(x+y+1)(x+y-1)(x-y+1)(x-y-1).
\end{aligned}
\]
Since $x+y=N+2$ and $x-y=n-m$, this gives
\eqref{eq:smith-omega-nm}.

To prove irreducibility, let $W\subset V^+$ be a nonzero
submodule and choose $0\neq w\in W$.
Since $E$ is nilpotent on $V^+$, there exists 
$l\in \ZZ_{\geq0}$
such that $E^lw\neq0$ and $E^{l+1}w=0$.
The formula for $E$, whose coefficients are nonzero
for $1\leq r\leq q$, shows that
$\ker(E|_{V^+})=\mathbf C s_N$.
Hence $E^lw$ is a nonzero scalar multiple of $s_N$,
so $s_N\in W$.
Repeated application of $F$ then gives all the basis
vectors $s_{N-2r}$, since its coefficients are nonzero
for $0\leq r<q$.
Thus $W=V^+$, proving irreducibility.
\end{proof}

\begin{remark}
The raising coefficients can also be recovered directly
from the Smith Casimir.
Write
\[
Es_{N-2r}=\alpha_r s_{N-2r+2}
\qquad (1\leq r\leq q).
\]
Using $FE=\Omega-\Phi(H)$ and the formula for $F$,
we obtain
\[
4(N-2r+2)(N-2r+3)\alpha_r
=\omega_{n,m}-\Phi(N-2r+1).
\]
Substituting the central parameters $c=c_{n,m}$ and
$\omega_{n,m}$ gives
\[
\omega_{n,m}-\Phi(N-2r+1)
=4r(n-r+1)(m-r+1)(N-r+2),
\]
which yields the same formula for the action of $E$.
\end{remark}

\begin{example}[The one-dimensional case]
\label{ex:smith-one-dimensional}
For $n\geq 0$, the singular subspace of $L_n\otimes L_0$ is
\[
(L_n\otimes L_0)^+
   =\mathbb C s_n,
\qquad
s_n=1_n\otimes 1_0.
\]
The Smith generators act by
\[
Es_n=Fs_n=0,
\qquad
Hs_n=(n+1)s_n.
\]
The central parameters are
\[
c_{n,0}=(n+1)^2,
\qquad
\omega_{n,0}
=-\frac14(n+1)^2(n+3)(n-1).
\]
In particular, $H^3-c_{n,0}H$ acts by zero, as required by
the Smith relation $[E,F]=2(H^3-cH)$.
For $n=0$, we have $H=1, c=1$ and $\omega_{0,0}=3/4$. 
\end{example}

\begin{example}
\label{ex:L11}
For $V=L_1\otimes L_1$, the singular subspace has the
ordered basis $(s_2,s_0)$, where
\[
s_2=1_1\otimes 1_1,\qquad
s_0=\frac12\bigl(f1_1\otimes 1_1-1_1\otimes f1_1\bigr).
\]
The Smith generators are represented by
\[
E=\begin{pmatrix}0&\frac12\\0&0\end{pmatrix},
\qquad
F=\begin{pmatrix}0&0\\24&0\end{pmatrix},
\qquad 
H=\begin{pmatrix}3&0\\0&1\end{pmatrix}. 
\]
Here $T_1=T_2=3$, so $c=7$ and $\omega=15/4$.
In particular,
\[
[E,F]=\begin{pmatrix}12&0\\0&-12\end{pmatrix}
      =2(H^3-7H).
\]
\end{example}

\begin{example}
\label{ex:L32}
For $V=L_3\otimes L_2$, the singular subspace has the
ordered basis $(s_5,s_3,s_1)$. 
See Figure~\ref{fig:L32}. 
In this basis,
\[
E=
\begin{pmatrix}
0&\frac65&0\\
0&0&\frac53\\
0&0&0
\end{pmatrix},
\qquad
F=
\begin{pmatrix}
0&0&0\\
120&0&0\\
0&48&0
\end{pmatrix}, 
\qquad 
H=
\begin{pmatrix}
6&0&0\\
0&4&0\\
0&0&2
\end{pmatrix}.
\]
Here $T_1=15$ and $T_2=8$, so the central parameters are
$c=24$ and $\omega=0$. Directly,
\[
[E,F]=\operatorname{diag}(144,-64,-80)
      =2(H^3-24H).
\]
\begin{figure}[tb]%[h]
  \centering
  \hspace{-8mm}
  \includegraphics[width=0.5\textwidth,  
  trim=50mm 10mm 50mm 5mm,clip]{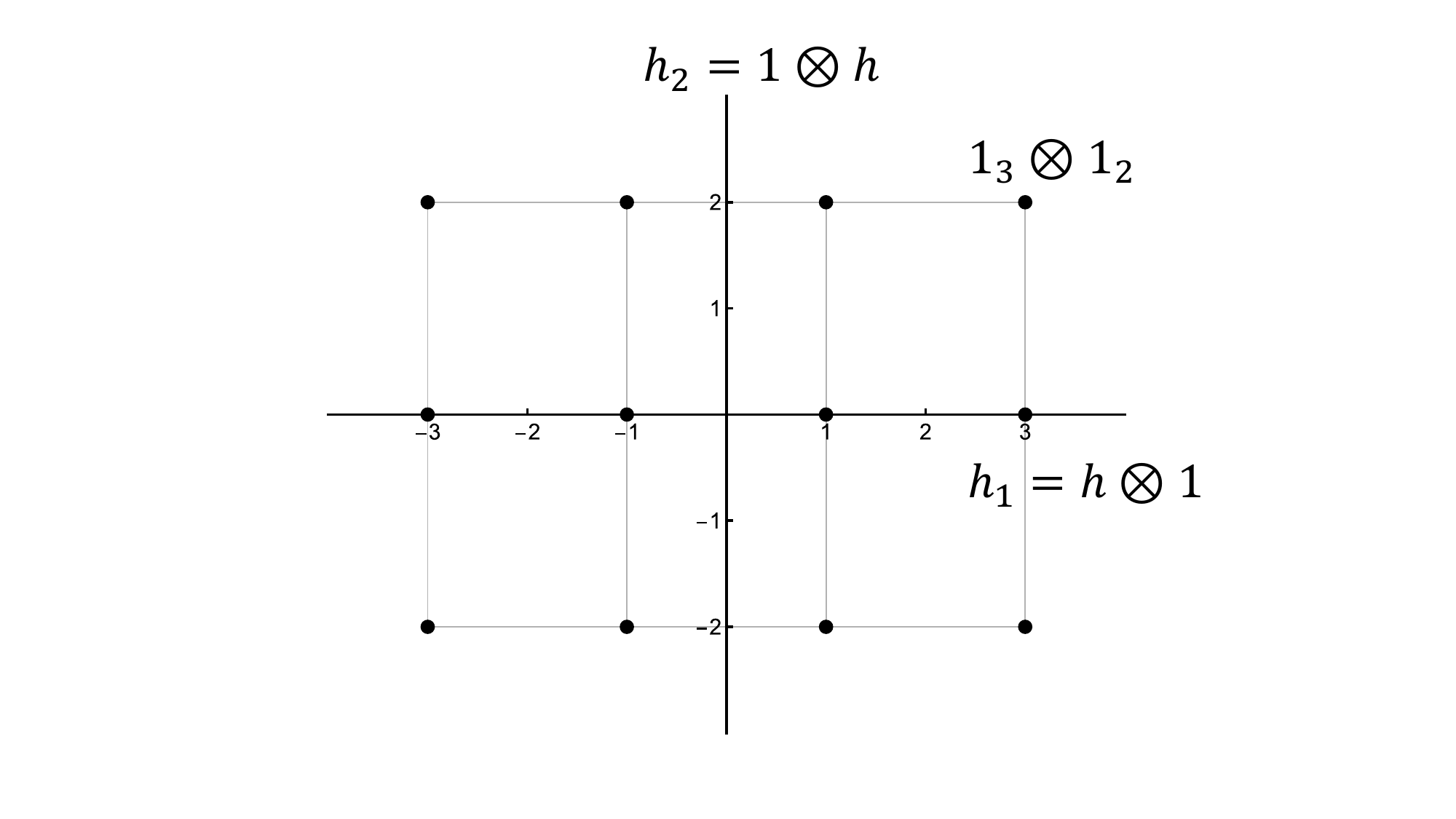}
  \hfill
  \includegraphics[width=0.5\textwidth,  
  trim=50mm 10mm 50mm 5mm,clip]{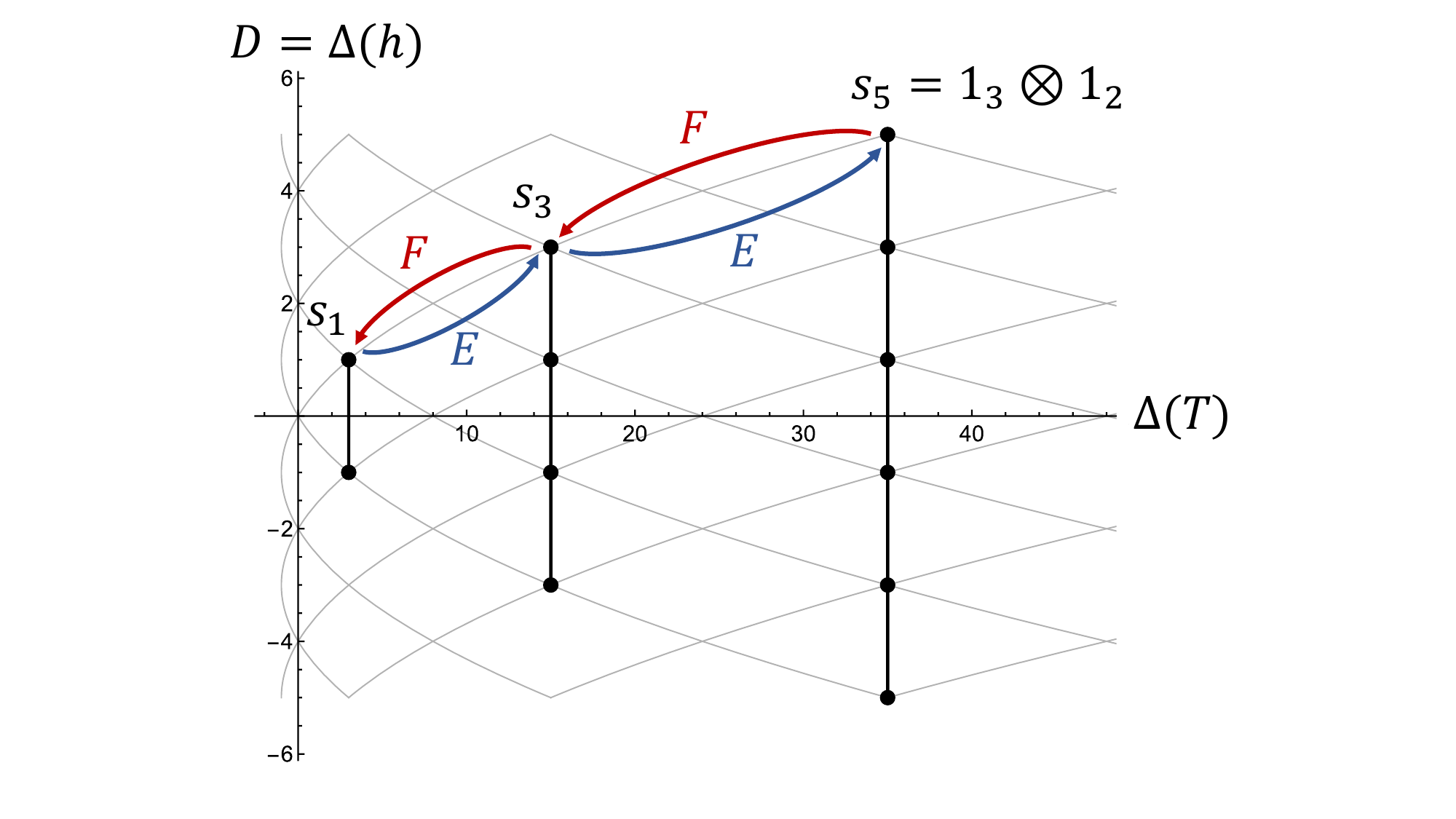}
\caption{The tensor product $L_3\otimes L_2$ in two bases
(Example~\ref{ex:L32}).
Left: the rectangular weight lattice for $(h_1,h_2)$.
Right: the joint spectrum of $(\Delta(T),\Delta(h))$
corresponding to $L_1\oplus L_3\oplus L_5$.
The points representing the highest-weight vectors
$s_1,s_3,s_5$ lie on the parabola $x=y(y+2)$:
each $s_\ell$ has eigenvalues $\ell(\ell+2)$ and $\ell$
under $\Delta(T)$ and $\Delta(h)$, respectively.
The Smith generators $E$ and $F$ connect adjacent
highest-weight lines.}
  \label{fig:L32}
\end{figure}
\end{example}

\begin{example}
\label{ex:smith-two-dimensional-family}
Let $V=L_n\otimes L_1$ with $n\geq1$. The singular
subspace $V^+$ has the ordered basis
\[
(s_{n+1},s_{n-1}),
\qquad
s_{n-1}
=\frac{1}{n+1}f1_n\otimes1_1
 -\frac{n}{n+1}1_n\otimes f1_1.
\]
In this basis, the Smith generators are
\[
E=
\begin{pmatrix}
0&\dfrac{n}{n+1}\\
0&0
\end{pmatrix},
\qquad
F=
\begin{pmatrix}
0&0\\
4(n+1)(n+2)&0
\end{pmatrix}, 
\qquad 
H=
\begin{pmatrix}
n+2&0\\
0&n
\end{pmatrix}. 
\]
Their central characters are
\[
c_{n,1}=(n+1)^2+3,\qquad
\omega_{n,1}
=-\frac14 (n+2)(n+4)n(n-2).
\]
Thus $L_n\otimes L_1$ gives a family of
two-dimensional irreducible Smith modules.
In particular, $\omega_{2,1}=0$.
For $n=1$, it reduces to Example~\ref{ex:L11}. 
\end{example}

\begin{example}
\label{ex:smith-three-dimensional}
Let $V=L_n\otimes L_2$ with $n\geq 2$. The singular
subspace $V^+$ is three-dimensional. In the ordered basis
$(s_{n+2},s_n,s_{n-2})$,
the Smith generators are represented by
\[
E=
\begin{pmatrix}
0&\dfrac{2n}{n+2}&0\\
0&0&\dfrac{2(n-1)(n+2)}{n(n+1)}\\
0&0&0
\end{pmatrix},
\qquad 
F=
\begin{pmatrix}
0&0&0\\
4(n+2)(n+3)&0&0\\
0&4n(n+1)&0
\end{pmatrix},
\]
and $H={\rm diag}(n+3,n+1,n-1)$. 
The central parameters are
\[
c_{n,2}=(n+1)^2+8,\qquad
\omega_{n,2}
=-\frac14(n+3)(n+5)(n-1)(n-3).
\]
Thus these tensor products give a family of
three-dimensional irreducible Smith modules with varying
central characters. In particular, $\omega_{3,2}=0$
as observed in Example \ref{ex:L32}. 
More generally, \eqref{eq:smith-omega-nm} shows that
$\Omega$ acts by zero on $V^+$ whenever $|n-m|=1$;
this does not assert that $\Omega$ vanishes in the MZ algebra.
\end{example}

\subsection{Finite-dimensional Smith modules and the tensor-product range}
\label{sec:smith-shapovalov}

We now compare the modules of Theorem~\ref{thm:smith-tensor-module}
with finite-dimensional simple modules of the abstract cubic
Smith algebra. 
Smith classifies finite-dimensional simple modules as simple
quotients of highest-weight modules
\cite[Lemmas~2.1--2.3]{S}.
In our normalization, the condition for such a quotient
to have a given dimension can be expressed directly
in terms of the Smith potential.
We first derive this condition using a contravariant
bilinear form.

Fix $c\in\mathbb C$ and a highest weight $\lambda\in\mathbb C$.
Let $M(\lambda,c)$ be the highest-weight module for the
abstract generators $X,Y,K$ of Section~\ref{sec:smith},
generated by a vector $v_0$ satisfying
\[
Xv_0=0,\qquad Kv_0=\lambda v_0.
\]
Here the central parameter acts by the chosen scalar $c$.
Set
\[
v_r=Y^rv_0\qquad (r\geq0).
\]
Then $M(\lambda,c)$ has basis $\{v_r\}_{r\geq0}$, with
$Kv_r=(\lambda-2r)v_r$.

The algebra has an anti-involution $\dagger$ given by
\[
X^\dagger=Y,\qquad Y^\dagger=X,\qquad
K^\dagger=K,\qquad c^\dagger=c.
\]
We use the symmetric $\mathbb C$-bilinear form
$(\, ,\,)$ on $M(\lambda,c)$ characterized by
\[
(v_0,v_0)=1,\qquad
(av,w)=(v,a^\dagger w)
\]
for every algebra element $a$ and all $v,w\in M(\lambda,c)$.
This is the Shapovalov form. It is bilinear, with no
complex conjugation, and need not be positive definite.

\begin{lemma}
\label{lem:smith-shapovalov}
Write
\[
\beta_r=\Phi(\lambda)-\Phi(\lambda-2r)
\qquad (r\geq1),
\]
where $\Phi$ is the potential \eqref{eq:potential} with
the chosen value of $c$. Then
\[
Xv_r=\beta_r v_{r-1}.
\]
The basis vectors of distinct weights are orthogonal for
the Shapovalov form, and
\begin{equation}
\label{eq:smith-shapovalov-product}
(v_r,v_r)=\prod_{j=1}^{r}\beta_j
=\prod_{j=1}^{r}
 \bigl[\Phi(\lambda)-\Phi(\lambda-2j)\bigr].
\end{equation}
Moreover, on setting $\mu_r=\lambda-r+1$, one has
\begin{equation}
\label{eq:smith-shapovalov-factor}
\beta_r
=2r \mu_r\bigl(\mu_r^2+r^2-c-1\bigr).
\end{equation}
\end{lemma}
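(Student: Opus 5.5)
We argue by induction on $r$, using the abstract Casimir $\Omega_{\mathcal S}$ to derive the raising formula, then adjointness for the norms, and finally a direct factorisation of the potential difference.

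\emph{Action of $X$.} We show $Xv_r=\beta_r v_{r-1}$ by induction on $r$, with $\beta_0=0$ encoding $Xv_0=0$. For $r\ge1$, write $Xv_r=XYv_{r-1}=(YX+P(K))v_{r-1}$. By the inductive hypothesis this equals
\[
\bigl(\beta_{r-1}+P(\lambda-2r+2)\bigr)v_{r-1}.
\]
Using the difference equation \eqref{eq:smith-potential-difference}, $P(\lambda-2r+2)=\Phi(\lambda-2r+2)-\Phi(\lambda-2r)$. The sum then telescopes to $\Phi(\lambda)-\Phi(\lambda-2r)=\beta_r$.

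Alternatively, the Casimir gives this in one step. Since $\Omega_{\mathcal S}=YX+\Phi(K)$ acts on $v_0$ by $\Phi(\lambda)$ and is central, it acts on all of $M(\lambda,c)$ by $\Phi(\lambda)$. Then $Xv_r=XYv_{r-1}=(\Omega_{\mathcal S}-\Phi(K-2))v_{r-1}$. Here $K-2$ acts by $\lambda-2r$, which gives the same result.

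\emph{Shapovalov form.} We take existence of the form as given by the usual construction. It is obtained from the projection of $M(\lambda,c)$ onto $\mathbb C v_0$ along the lower weight spaces, setting $(av_0,bv_0)$ equal to the $v_0$-coefficient of $a^\dagger b v_0$.

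Orthogonality of distinct weights follows from $K^\dagger=K$. Indeed,
\[
(\lambda-2r)(v_r,v_s)=(Kv_r,v_s)=(v_r,Kv_s)=(\lambda-2s)(v_r,v_s).
\]
For the norms, adjointness with $Y^\dagger=X$ gives
\[
(v_r,v_r)=(Yv_{r-1},v_r)=(v_{r-1},Xv_r)=\beta_r(v_{r-1},v_{r-1}).
\]
Induction from $(v_0,v_0)=1$ then yields \eqref{eq:smith-shapovalov-product}.

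\emph{Factorisation.} This is the only computational step. From \eqref{eq:potential}, $\Phi(K)=\tfrac14 w(w-2c)$ with $w=K(K+2)=(K+1)^2-1$. Set $a=\lambda+1$ and $b=\lambda-2r+1$, and define $x=a^2-1$ and $y=b^2-1$. Then
\[
\beta_r=\tfrac14\bigl(x(x-2c)-y(y-2c)\bigr)=\tfrac14(x-y)(x+y-2c).
\]
We have $x-y=a^2-b^2=(a-b)(a+b)=2r\cdot2\mu_r$, since $a+b=2(\lambda-r+1)=2\mu_r$. Next, $x+y-2c=a^2+b^2-2-2c$. With $a=\mu_r+r$ and $b=\mu_r-r$, this equals $2(\mu_r^2+r^2)-2-2c$.

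Multiplying gives $\beta_r=\tfrac14\cdot4r\mu_r\cdot2(\mu_r^2+r^2-c-1)$, which is \eqref{eq:smith-shapovalov-factor}. The only care needed is tracking the shift $K\mapsto K+1$ inside $\Phi$ correctly.
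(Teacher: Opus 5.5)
Your proposal is correct and follows essentially the paper's route: the identity $\Omega_{\mathcal S}=XY+\Phi(K-2)$ (or, equivalently, your telescoping induction via the difference equation) gives $Xv_r=\beta_r v_{r-1}$, contravariance with $Y^\dagger=X$ gives the norm recursion, and the factorization \eqref{eq:smith-shapovalov-factor} follows by substituting \eqref{eq:potential}. The paper only asserts that last substitution, whereas you carry it out explicitly and correctly; you also sketch why the form exists, which the paper takes as given.
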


\begin{proof}
The central element
$\Omega_S=YX+\Phi(K)$ acts on $v_0$ by
$\Phi(\lambda)$, and hence acts by this scalar on the
whole highest-weight module. Using the other expression
$\Omega_S=XY+\Phi(K-2)$, we obtain
\[
Xv_r=XYv_{r-1}
=\bigl[\Phi(\lambda)
       -\Phi(\lambda-2r)\bigr]v_{r-1}.
\]
Since $K^\dagger=K$, vectors of distinct $K$-weights
are orthogonal. Contravariance then gives
\[
(v_r,v_r)
=(Yv_{r-1},v_r)
=(v_{r-1},Xv_r)
=\beta_r(v_{r-1},v_{r-1}),
\]
which proves \eqref{eq:smith-shapovalov-product} by induction.
Finally, substituting the potential \eqref{eq:potential}
and writing $\mu_r=\lambda-r+1$ gives
\eqref{eq:smith-shapovalov-factor}.
\end{proof}

\begin{proposition}
\label{prop:smith-finite-dimensional}
The highest-weight module $M(\lambda,c)$ has a
$d$-dimensional quotient if and only if
\[
\Phi(\lambda)=\Phi(\lambda-2d).
\]
This quotient is simple if and only if $d$ is the
smallest positive integer satisfying this equality.
Consequently, every finite-dimensional simple module
of the cubic Smith algebra is obtained in this way
for some $\lambda$ and $c$.
\end{proposition}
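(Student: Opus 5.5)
The plan is to work entirely with the explicit basis $\{v_r\}_{r\ge0}$ of $M(\lambda,c)$ and the formula $Xv_r=\beta_rv_{r-1}$ from Lemma~\ref{lem:smith-shapovalov}, where $\beta_r=\Phi(\lambda)-\Phi(\lambda-2r)$.

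\textbf{Submodules of $M(\lambda,c)$.} Every submodule is $K$-stable. Each $K$-weight $\lambda-2r$ occurs with multiplicity one, so a submodule is spanned by the $v_r$ it contains. Since $Y$ raises the index, any proper submodule is of the form
\[
M_{\geq d}:=\operatorname{span}\{v_r\mid r\ge d\}
\]
for some $d\ge1$. The condition for $M_{\geq d}$ to be a submodule is $Xv_d\in M_{\geq d}$, since $X$ applied to $v_r$ with $r>d$ already lands in $M_{\ge d}$. This condition says exactly $\beta_d=0$, i.e.\ $\Phi(\lambda)=\Phi(\lambda-2d)$.

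\textbf{Finite-dimensional quotients.} A $d$-dimensional quotient has kernel a submodule of codimension $d$. By the previous paragraph this kernel must be $M_{\ge d}$, so a $d$-dimensional quotient exists if and only if $\beta_d=0$, which is the stated criterion.

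\textbf{Simplicity.} The quotient $M/M_{\ge d}$ is simple exactly when no $M_{\ge d'}$ with $1\le d'<d$ is a submodule, i.e.\ when $\beta_{d'}\ne0$ for all $1\le d'<d$. This is the same as requiring $d$ to be the minimal positive solution. Equivalently, in the language of the Shapovalov form, the radical is $M_{\ge d}$ for the first $d$ with $\beta_d=0$, by the product formula \eqref{eq:smith-shapovalov-product}.

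\textbf{Exhaustion.} Let $W$ be a finite-dimensional simple module and take $c$ to be the scalar by which the central parameter acts. I expect this step to be the main obstacle, namely justifying that $K$ has an eigenvector, that $c$ acts by a scalar, and the universality of $M(\lambda,c)$.
\begin{enumerate}[label=(\roman*)]
\item Over $\mathbb C$, $K$ has an eigenvector $w$. The vectors $X^kw$ have distinct $K$-eigenvalues, so some $X^kw\ne0$ is killed by $X$. This gives a highest-weight vector of some weight $\lambda$.
\item By simplicity, $W$ is generated by this vector.
\item The universal property of $M(\lambda,c)$ then yields a surjection $M(\lambda,c)\to W$. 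Its kernel is a submodule of finite codimension, so it equals $M_{\ge d}$ by the classification above.
\item Hence $W$ is one of the quotients just described.
\end{enumerate}
The universal property holds because $M(\lambda,c)$ is spanned by the $Y^rv_0$, a PBW-type fact for $\mathcal S(P)$ that the paper takes from \cite{S}. I would cite \cite[Lemmas~2.1--2.3]{S} for it rather than reprove it.
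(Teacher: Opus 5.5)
Your proposal is correct and follows the paper's argument: the relation $Xv_d=\beta_d v_{d-1}$ decides when $\operatorname{span}\{v_r\}_{r\ge d}$ is a submodule, simplicity is governed by the first positive zero of $\beta_j$, and exhaustion comes from a highest-weight vector in a finite-dimensional simple module (you get it by iterating $X$ on an eigenvector, while the paper chooses $\lambda$ with $\lambda+2$ not an eigenvalue). Classifying the nonzero proper submodules as the $M_{\ge d}$ up front, using the one-dimensional weight spaces, makes two points explicit that the paper leaves implicit: the ``only if'' direction of the first claim, and the identification of the kernel in the exhaustion step. The steps you flagged as obstacles are routine: an eigenvector exists because $W$ is finite-dimensional over $\mathbb C$, and the central parameter acts by a scalar by Schur's lemma.
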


\begin{proof}
By Lemma~\ref{lem:smith-shapovalov},
\[
Xv_d
=\bigl[\Phi(\lambda)-\Phi(\lambda-2d)\bigr]v_{d-1}.
\]
Hence $v_d$ is a highest-weight vector precisely when
the displayed equality holds. In that case, the span
of $v_d,v_{d+1},\ldots$ is a submodule, and its quotient
has basis represented by $v_0,\ldots,v_{d-1}$.

If the equality also holds for some $1\leq j<d$,
the image of $v_j$ generates a proper nonzero submodule
of this quotient. Conversely, if no such $j$ exists,
every raising coefficient between $v_0$ and $v_{d-1}$
is nonzero. The weights are distinct, so every nonzero
submodule contains a weight vector and hence contains
all the basis vectors by applying $X$ and $Y$.
Thus the quotient is simple.

For completeness, let $V$ be a finite-dimensional simple
module. Choose an eigenvalue $\lambda$ of $K$ such that
$\lambda+2$ is not an eigenvalue; such a choice is possible
because the spectrum of $K$ is finite. For a nonzero
$\lambda$-eigenvector $v$, the relation $[K,X]=2X$ gives
\[
K(Xv)=(\lambda+2)Xv.
\]
Hence $Xv=0$. By simplicity, $v$ generates $V$, so $V$
is a simple quotient of $M(\lambda,c)$. This recovers
the exhaustiveness assertion of
Smith \cite[Lemma~2.3(a)]{S}.
\end{proof}

The minimality condition also has a description in terms
of the Shapovalov form. If $d$ is the first positive integer
for which $\beta_d=0$, then
\eqref{eq:smith-shapovalov-product} gives
\[
(v_r,v_r)\ne0\quad(0\leq r<d),\qquad
(v_r,v_r)=0\quad(r\geq d).
\]
Since distinct weight spaces are orthogonal, the radical
of the form is precisely
$\operatorname{span}\{v_d,v_{d+1},\ldots\}$.
The $d$-dimensional simple quotient is therefore the
quotient by this radical.

\begin{remark}
For the ordinary $\mathfrak{sl}_2$ highest-weight module,
the raising coefficient is
\[
e f^j v_0=j(\lambda-j+1)f^{j-1}v_0.
\]
As $j$ ranges over the positive integers, this coefficient
has at most one zero. If it vanishes, the vector at that
level generates the maximal proper submodule.
For a Smith highest-weight module, the corresponding
coefficient is
\[
\beta_j=\Phi(\lambda)-\Phi(\lambda-2j).
\]
It may vanish at several positive integers
$d_1<d_2<\cdots$. The resulting highest-weight vectors
give nested submodules
\[
Y^{d_1}M(\lambda,c)\supset
Y^{d_2}M(\lambda,c)\supset\cdots.
\]
The first of these is the maximal proper submodule,
so the simple quotient has dimension $d_1$.
The later zeros describe submodules within it.
This is why the finite-dimensional simplicity criterion
requires the \emph{smallest} positive zero, rather than
an arbitrary solution of
$\Phi(\lambda)=\Phi(\lambda-2d)$.
\end{remark}

The factorization \eqref{eq:smith-shapovalov-factor}
makes the finite-dimensional condition more explicit.
For a proposed dimension $d\geq1$, set
\[
\mu=\lambda-d+1.
\]
Then $\beta_d=0$ is equivalent to
\begin{equation}
\label{eq:smith-two-branches}
\mu=0
\qquad\text{or}\qquad
\mu^2+d^2=c+1.
\end{equation}
Thus finite-dimensional quotients have two candidate
branches in the $(\lambda,c)$-parameters. In either
branch, the quotient is simple precisely when
$\beta_j\ne0$ for every $1\leq j<d$.

\begin{proposition}[Range of the tensor-product construction]
\label{prop:smith-tensor-range}
Let $L$ be a finite-dimensional simple module for the
cubic Smith algebra, with dimension $d$, highest weight
$\lambda$, and central parameter $c$. Set
\[
\mu=\lambda-d+1.
\]
Then $L$ is isomorphic to the Smith module
$(L_n\otimes L_m)^+$ for some $n,m\in\mathbb Z_{\geq0}$
if and only if
\begin{equation}
\label{eq:smith-tensor-range}
d,\mu\in\mathbb Z_{>0},\qquad
\mu\geq d,\qquad
c+1=\mu^2+d^2.
\end{equation}
When these conditions hold, order the tensor factors so that
$n\geq m$. Then they are uniquely determined by
\[
n=\mu-1,\qquad m=d-1.
\]
Thus the tensor-product construction realizes precisely
the positive integral region $\mu\geq d$ of the second
branch in \eqref{eq:smith-two-branches}.
\end{proposition}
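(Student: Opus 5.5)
The plan is to treat the two directions separately. In both we use that a finite-dimensional simple module with highest-weight vector is determined up to isomorphism by $(\lambda,c)$ together with its dimension. Indeed, by Proposition~\ref{prop:smith-finite-dimensional} it is the simple quotient of $M(\lambda,c)$. That quotient is unique, being the quotient by the radical of the Shapovalov form, and its dimension is the smallest positive zero of $\beta_j$.

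\emph{Necessity.} Suppose $L\simeq(L_n\otimes L_m)^+$, and order the factors so that $n\geq m$. By Theorem~\ref{thm:smith-tensor-module}, $d=m+1$ and $\lambda=N+1=n+m+1$, so $\mu=\lambda-d+1=n+1$. Hence $d,\mu\in\mathbb Z_{>0}$ and $\mu\geq d$. Theorem~\ref{thm:smith-tensor-module} also gives $c=c_{n,m}=n(n+2)+m(m+2)+1$, so
\[
c+1=(n+1)^2+(m+1)^2=\mu^2+d^2.
\]
(If the module is presented with $n<m$, the formulas are symmetric in $n,m$, so we may swap the factors.) This argument also shows the uniqueness claim: $n=\mu-1$ and $m=d-1$ are forced by $(\lambda,d)$.

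\emph{Sufficiency.} Given \eqref{eq:smith-tensor-range}, set $n=\mu-1\geq m=d-1\geq0$. Then $(L_n\otimes L_m)^+$ is a simple Smith module of dimension $m+1=d$ with highest weight $n+m+1=\mu+d-1=\lambda$. Its central parameter is $c_{n,m}=\mu^2+d^2-1=c$. So both $L$ and $(L_n\otimes L_m)^+$ are $d$-dimensional simple quotients of the same $M(\lambda,c)$, and by the uniqueness above they are isomorphic. The Smith module structure in Theorem~\ref{thm:smith-tensor-module} is presented with $c$ acting as $c_{n,m}$, which fits the convention that the central parameter acts by a scalar.

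\emph{Main obstacle.} The one point needing care is the identification of the abstract simple module with the tensor one. One option is to cite that simple quotients of $M(\lambda,c)$ are unique. Alternatively, one can check directly from \eqref{eq:smith-shapovalov-factor} that, under \eqref{eq:smith-tensor-range}, $\beta_j\neq0$ for $1\leq j<d$, so that $d$ really is the minimal zero. With $\mu_j=\lambda-j+1=\mu+d-j>0$, we need $\mu_j^2+j^2\neq\mu^2+d^2$ for $j<d$. Write $j=d-k$ with $1\leq k<d$. Then
\[
(\mu+k)^2+(d-k)^2-\mu^2-d^2=2k(\mu-d+k)>0,
\]
since $\mu\geq d$. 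This confirms the dimension is exactly $d$, consistent with the irreducibility already proved in Theorem~\ref{thm:smith-tensor-module}. Finally, the closing remark that the construction realizes the region $\mu\geq d$ of the second branch in \eqref{eq:smith-two-branches} is immediate, because $\mu>0$ excludes the first branch $\mu=0$.
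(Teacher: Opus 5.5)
Your proposal is correct and follows essentially the same route as the paper. Necessity reads off $d=m+1$, $\lambda=n+m+1$ and $c_{n,m}$ from Theorem~\ref{thm:smith-tensor-module}; sufficiency sets $n=\mu-1$, $m=d-1$ and identifies both modules as the simple quotient of $M(\lambda,c)$, and your check $(\mu+k)^2+(d-k)^2-\mu^2-d^2=2k(\mu-d+k)>0$ is the same computation as the paper's factorization $\beta_j=4j(\mu-j)(d-j)(\mu+d-j)$.
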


\begin{proof}
Suppose first that $L=(L_n\otimes L_m)^+$, and interchange
the factors if necessary so that $n\geq m$. By
Theorem~\ref{thm:smith-tensor-module},
\[
d=m+1,\qquad \lambda=n+m+1,\qquad
\mu=\lambda-d+1=n+1.
\]
In particular, $\mu\geq d$, and
\[
c+1
=n(n+2)+m(m+2)+2
=(n+1)^2+(m+1)^2
=\mu^2+d^2.
\]

Conversely, assume \eqref{eq:smith-tensor-range}, and set
$n=\mu-1$ and $m=d-1$. Then $n\geq m\geq0$.
Theorem~\ref{thm:smith-tensor-module} gives a Smith
module $(L_n\otimes L_m)^+$ of dimension $d$, highest
weight
\[
n+m+1=\mu+d-1=\lambda,
\]
and central parameter
\[
(n+1)^2+(m+1)^2-1
=\mu^2+d^2-1=c.
\]

To check the first-zero condition directly, substitute
these values into
\eqref{eq:smith-shapovalov-factor}. For $1\leq j\leq d$
one obtains
\[
\beta_j
=4j(\mu-j)(d-j)(\mu+d-j).
\]
Every factor is nonzero for $1\leq j<d$, whereas
$\beta_d=0$. Hence the corresponding $d$-dimensional
quotient of $M(\lambda,c)$ is simple by
Proposition~\ref{prop:smith-finite-dimensional}.
Both $L$ and $(L_n\otimes L_m)^+$ are simple quotients
with the same highest weight and central parameter,
so they are isomorphic.
\end{proof}

\begin{remark}
The first branch $\mu=0$ in \eqref{eq:smith-two-branches}
may be viewed as a Smith-algebra analogue of the
finite-dimensional irreducible $\mathfrak{sl}_2$-modules.
It lies outside the tensor-product range of
Proposition~\ref{prop:smith-tensor-range}.
In this branch, $\lambda=d-1$, so the $H$-weights are
$d-1,d-3,\ldots,1-d$, exactly as in the
$d$-dimensional irreducible $\mathfrak{sl}_2$-module.

With $v_r=F^rv_0$ for $0\leq r<d$, the action is
\[
\begin{aligned}
Ev_r&=r(d-r)A_rv_{r-1} &&(1\leq r\leq d-1),\\
Fv_r&=v_{r+1} &&(0\leq r\leq d-2),\\
Hv_r&=(d-1-2r)v_r &&(0\leq r\leq d-1),
\end{aligned}
\]
where
\[
A_r=2\bigl((d-r)^2+r^2-c-1\bigr),
\]
and $Ev_0=Fv_{d-1}=0$.
The module is simple precisely when
$A_r\neq0$ for $1\leq r<d$.

On each such simple module, define
\[
ev_0=0,\qquad
ev_r=A_r^{-1}Ev_r\quad(1\leq r<d),
\qquad f=F,\qquad h=H.
\]
Then $ev_r=r(d-r)v_{r-1}$, and these operators satisfy
\[
[h,e]=2e,\qquad [h,f]=-2f,\qquad [e,f]=h.
\]
Thus each simple module in the $\mu=0$ branch
admits the usual $d$-dimensional irreducible
$\mathfrak{sl}_2$ action after rescaling $E$
by the generally weight-dependent factor $A_r^{-1}$,
with $F$ and $H$ unchanged.
The original Smith action, however, retains its dependence
on $c$ through the coefficients $A_r$.
\end{remark}

\section{Conclusion}

We have shown that the diagonal Mickelsson--Zhelobenko
algebra associated with
$(\mathfrak{sl}_2\oplus\mathfrak{sl}_2,
\mathfrak{sl}_2^{\mathrm{diag}})$ contains a realization of
the cubic Smith algebra. The extremal projector first
constructs the singular vectors in $L_n\otimes L_m$;
the projected operators then connect the highest-weight
lines in its Clebsch--Gordan decomposition. After a
rescaling \eqref{eq:smith-generators}, 
these operators satisfy the cubic Smith
relation (Theorem~\ref{thm:cubic}), 
with central parameters determined by the
Casimir eigenvalues of the two tensor factors
(Proposition~\ref{prop:smith-central-parameters}).

Theorem~\ref{thm:smith-tensor-module} describes the
resulting Smith module explicitly. In particular, the
construction gives a simple module of dimension
$\min\{n,m\}+1$ on $(L_n\otimes L_m)^+$.
Proposition~\ref{prop:smith-tensor-range} determines
exactly which finite-dimensional simple Smith modules
arise in this way. Thus tensor-product branching provides
both a concrete representation of the cubic Smith algebra
and a precise description of the range of this
representation-theoretic construction.

\section*{Acknowledgment}

The authors are grateful to
Prof.\ Yoshiyuki Koga and Prof.\ Osami Yasukura
for valuable discussions.
They thank the organizers and participants of the
{\it $41$st Summer Seminar on Lie Algebras} and the
{\it $2026$ meeting of Dynamics of Fields and Spacetime
through Discrete Methods} for stimulating opportunities
to present and discuss this work.
In particular, they thank Prof.\ Ryo Sato for a stimulating
discussion on Smith algebras during the former seminar.
The work of TM was supported by
JSPS KAKENHI Grant Number JP24K06665.

%%%%%%%%%%%%%%%%%%%%%%%%%%%%%%%%%%
\appendix

%%%%%%%%%%%%%%%%%%%%%%%%%%%%%%%%%%
\appendix

\section{Explicit formula for the singular vectors}
\label{app:explicit}

The singular vectors obtained by the extremal-projector
construction admit an explicit closed formula.
Assume $n\geq m$ and write
\[
s_{N-2r}
=
\sum_{k=0}^{r}
c_{r,k}\,
f^{r-k}1_n\otimes f^k1_m,
\qquad N=n+m,\quad 0\leq r\leq m.
\]

\begin{proposition}
For $0\le r\le m$,
\begin{equation}
\label{eq:explicit}
c_{r,k}
=
(-1)^k\binom{r}{k}
\frac{(n-r+1)_k(m-r+1)_{r-k}}
     {(N-2r+2)_r},
\end{equation}
where $(a)_k=a(a+1)\cdots(a+k-1)$ is the rising Pochhammer symbol.
Equivalently,
\begin{equation}
\label{eq:binomial}
c_{r,k}
=
(-1)^k
\frac{
\binom{n-r+k}{k}\binom{m-k}{r-k}
}{
\binom{n+m-r+1}{r}
}.
\end{equation}
\end{proposition}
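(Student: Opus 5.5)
The plan is to characterize $s_{N-2r}$ by two properties and then check that the proposed coefficients have both. First, $s_{N-2r}$ lies in $V^+_{N-2r}$, which is one-dimensional by \eqref{eq:singular-decomposition}. Second, it has a definite normalization with respect to a linear functional that kills $\Delta(f)V$.

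For the normalization, I use the functional $\sigma$ from the proof of Proposition~\ref{prop:singular-basis}, namely $\sigma(u_a)=(-1)^{r-a}$ with $u_a=f^a1_n\otimes f^{r-a}1_m$. It vanishes on $\Delta(f)V_{N-2r+2}$. Since $f^r1_n\otimes1_m-s_{N-2r}\in\Delta(f)V$ by Proposition~\ref{prop:projector-on-modules}, we get $\sigma(s_{N-2r})=\sigma(u_r)=1$. In the notation of the statement, the coefficient $c_{r,k}$ multiplies $u_{r-k}$, so the condition reads
\[
\sum_{k=0}^r(-1)^k c_{r,k}=1 .
\]
Hence any nonzero vector $\sum_k c_{r,k}u_{r-k}$ that satisfies this normalization and lies in $\ker\Delta(e)$ must equal $s_{N-2r}$.

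The singularity condition comes from the action formulas:
\[
\Delta(e)u_{r-k}=(r-k)(n-r+k+1)\,f^{r-k-1}1_n\otimes f^k1_m+k(m-k+1)\,f^{r-k}1_n\otimes f^{k-1}1_m .
\]
Collecting the coefficient of $f^{r-k-1}1_n\otimes f^{k}1_m$ gives the two-term recursion
\[
c_{r,k}(r-k)(n-r+k+1)+c_{r,k+1}(k+1)(m-k)=0,\qquad 0\le k<r .
\]
I will verify directly that the ratio $c_{r,k+1}/c_{r,k}$ computed from \eqref{eq:explicit} equals $-\frac{(r-k)(n-r+k+1)}{(k+1)(m-k)}$. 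The binomial factor contributes $\frac{r-k}{k+1}$. The Pochhammer factor $(n-r+1)_{k+1}/(n-r+1)_k$ contributes $n-r+k+1$. The factor $(m-r+1)_{r-k-1}/(m-r+1)_{r-k}$ contributes $1/(m-k)$. The sign flips. This step is routine, and since $m-k\ge m-r+1\ge1$, none of the divisions are by zero.

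The main obstacle is the normalization identity. After clearing the denominator it becomes
\[
\sum_{k=0}^r\binom rk(n-r+1)_k(m-r+1)_{r-k}=(N-2r+2)_r .
\]
This is the Vandermonde convolution for rising factorials, $(a+b)_r=\sum_k\binom rk(a)_k(b)_{r-k}$, applied with $a=n-r+1$ and $b=m-r+1$. Note that $a+b=N-2r+2$. I would cite this identity or prove it quickly via generating functions $(1-t)^{-a}(1-t)^{-b}=(1-t)^{-a-b}$.

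Finally, the binomial form \eqref{eq:binomial} follows by rewriting each Pochhammer symbol as a ratio of factorials:
\[
(n-r+1)_k=\frac{(n-r+k)!}{(n-r)!},\qquad (m-r+1)_{r-k}=\frac{(m-k)!}{(m-r)!},\qquad (N-2r+2)_r=\frac{(N-r+1)!}{(N-2r+1)!},
\]
and then simplifying.
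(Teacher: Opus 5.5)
Your proposal is correct and follows essentially the paper's route: the two-term recurrence from $\Delta(e)s_{N-2r}=0$, the normalization $\sum_k(-1)^kc_{r,k}=1$ coming from the functional $\sigma$, and a Vandermonde identity to finish. The only difference is that you verify the stated formula rather than solving the recurrence and then determining $c_{r,0}$. Your rising-factorial form $(a+b)_r=\sum_k\binom rk(a)_k(b)_{r-k}$, used with $a,b>0$, also avoids the paper's need to justify specializing the Chu--Vandermonde sum at the negative integer $a=-m$.
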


\begin{proof}
Using
\[
ef^a1_n=a(n-a+1)f^{a-1}1_n,
\]
the singularity condition $\Delta(e)s_{N-2r}=0$ gives
\[
\sum_{k=0}^{r-1}
\bigl[
c_{r,k}(r-k)(n-r+k+1)
+c_{r,k+1}(k+1)(m-k)
\bigr]
f^{r-k-1}1_n\otimes f^k1_m=0.
\]
By linear independence, we obtain the recurrence
\[
c_{r,k+1}
=-\frac{(r-k)(n-r+k+1)}{(k+1)(m-k)}c_{r,k}
\qquad (0\leq k<r).
\]
Solving it gives
\[
c_{r,k}
=(-1)^k
\frac{(-r)_k(n-r+1)_k}{k!\,(-m)_k}\,c_{r,0}
\qquad (0\leq k\leq r\leq m).
\]

To determine the common factor $c_{r,0}$,
use the functional $\sigma$ defined in
the proof of Proposition~\ref{prop:singular-basis}.
By the defining property of the extremal projection,
\[
f^r1_n\otimes1_m-s_{N-2r}
\in\Delta(f)V_{N-2r+2}.
\]
Since $\sigma$ vanishes on this subspace and
$\sigma(f^r1_n\otimes1_m)=1$, we obtain
$\sigma(s_{N-2r})=1$.
In the present notation, this means
\[
\sum_{k=0}^{r}(-1)^k c_{r,k}=1.
\]
Substituting the solution of the recurrence yields
\[
1=c_{r,0}
\sum_{k=0}^{r}
\frac{(-r)_k(n-r+1)_k}{k!\,(-m)_k}.
\]

We evaluate this sum using the Chu--Vandermonde identity
\cite[Eq.~(15.4.24)]{DLMF},
the terminating form of Gauss's hypergeometric
summation at unity:
\[
\sum_{k=0}^{r}\frac{(-r)_k(b)_k}{k!\,(a)_k}
=\frac{(a-b)_r}{(a)_r}.
\]
Here the identity is understood as a finite-sum identity;
the specialization $a=-m$ is valid because
$r\leq m$ ensures that all denominators are nonzero.
Taking $a=-m$ and $b=n-r+1$, we obtain
\[
\sum_{k=0}^{r}
\frac{(-r)_k(n-r+1)_k}{k!\,(-m)_k}
=\frac{(-N+r-1)_r}{(-m)_r}
=\frac{(N-2r+2)_r}{(m-r+1)_r}.
\]
In the last equality, we used the identity
$(-a)_k=(-1)^k(a-k+1)_k$.
Consequently,
\[
c_{r,0}
=\frac{(m-r+1)_r}{(N-2r+2)_r}.
\]

To simplify the resulting expression for $c_{r,k}$,
use
\[
(-r)_k=(-1)^k k!\binom{r}{k},\qquad
(-m)_k=(-1)^k(m-k+1)_k,
\]
together with
\[
(m-r+1)_r
=(m-r+1)_{r-k}(m-k+1)_k.
\]
Cancellation then gives
\[
c_{r,k}
=(-1)^k\binom{r}{k}
\frac{(n-r+1)_k(m-r+1)_{r-k}}
     {(N-2r+2)_r},
\]
which proves \eqref{eq:explicit}.

Finally, using the identity
\[
(a)_k=k!\binom{a+k-1}{k},
\]
we obtain
\[
c_{r,k}
=(-1)^k
\frac{
      \binom{n-r+k}{k}\binom{m-k}{r-k}}
     {\binom{N-r+1}{r}}. 
\]
This proves \eqref{eq:binomial}.

\end{proof}

\section{Explicit representatives of the MZ and Smith generators}
\label{app:MZgen}

We record explicit representatives of the individual
projected generators in
\[
Z(\mathfrak g,\mathfrak k)=\operatorname{Norm}(J)/J,
\qquad J=U'(\mathfrak g)E_{\mathrm d}.
\]
All rational functions of $D$ in the formulas below are
placed on the right. Equality here means equality of
classes modulo $J$.

For the raising generator, $e_1$ is already
$E_{\mathrm d}$-singular modulo $J$. Hence
\begin{align}
z^+=\Delta(p)e_1\Delta(p)=e_1=e\otimes1
\qquad \pmod J.
\end{align}

For the lowering generator, expanding the projector
against $f_1$ gives the representative
\begin{align}
z^-
&=\Delta(p)f_1\Delta(p) \notag\\
&=(f_1h_2-h_1f_2)\frac1D
  -F_{\mathrm d}^{\,2}e_1\frac1{D(D+1)}
  \notag \\
&=(f\otimes h-h\otimes f)\frac1D
  -(\Delta f)^{2}(e\otimes 1)\frac1{D(D+1)}
  \qquad \pmod J.
\label{eq:explicit-mz-minus}
\end{align}
Only the terms with projector index $k\leq2$
contribute modulo $J$, since $E_{\mathrm d}^3f_1\in J$.

One may also project the Cartan generator $h_1$.
Writing $z^0=\Delta(p)h_1\Delta(p)$, its class has the
equivalent forms
\begin{equation}
\label{eq:explicit-mz-zero}
z^0
=h_1+2F_{\mathrm d}e_1\frac1{D+2}
=\frac12\left(D+\frac{T_1-T_2}{D+2}\right)
\qquad \pmod J.
\end{equation}
The second equality follows from the definitions of
$T_1$ and $T_2$, after moving $E_{\mathrm d}$ to the
right and discarding terms in $J$.

These formulas represent individual elements of the MZ
algebra. Products such as $z^+z^-$ are computed in
$\operatorname{Norm}(J)/J$; their relations are established
in Section~\ref{sec:mz}.

In the normalization \eqref{eq:smith-generators}, the
corresponding Smith generators have representatives
\begin{align}
E&=e\otimes 1,\notag \\
F&=4\bigl[(f\otimes h-h\otimes f)(D+1)
          -(\Delta f)^{2}(e\otimes 1)\bigr],\\
H&=D+1=h\otimes 1+1\otimes h+1.
\notag 
\end{align}
In particular, although the displayed representative
of $z^-$ contains rational functions of $D$, the
denominators cancel in this representative of $F$.
The central parameter is $c=T_1+T_2+1$.

\section{Comparison with the quantum group
$U_q(\mathfrak{sl}_2)$}
\label{app:QG}

The relation between the Higgs algebra and the quantum
deformation of $SU(2)$ was studied by Zhedanov \cite{Zhed}.
Here we record how the deformation parameter in this
comparison relates formally to the tensor-product parameters
of our construction.

Write $q=e^\hbar$ and use the additive Cartan
presentation of $U_q(\mathfrak{sl}_2)$:
\[
[\widetilde H,\widetilde E]=2\widetilde E,\qquad
[\widetilde H,\widetilde F]=-2\widetilde F,\qquad
[\widetilde E,\widetilde F]
=\frac{q^{\widetilde H}-q^{-\widetilde H}}{q-q^{-1}}.
\]
Expanding the last relation at $\hbar=0$, 
with $\widetilde H$ held fixed, we have 
\[
\frac{q^{\widetilde H}-q^{-\widetilde H}}{q-q^{-1}}
=
\frac{\sinh(\hbar\widetilde H)}{\sinh\hbar}
 =
\widetilde H
+\frac{\hbar^2}{6}
  \bigl(\widetilde H^3-\widetilde H\bigr)
+O(\hbar^4).
\]
Set
\[
\widetilde E=\frac{\hbar}{2\sqrt3}E,\qquad
\widetilde F=\frac{\hbar}{2\sqrt3}F,\qquad
\widetilde H=H.
\]
This rescaling preserves the two Cartan relations,
while the commutator becomes
\[
[E,F]
=2\left(H^3-\left(1-\frac6{\hbar^2}\right)H\right)
+O(\hbar^2).
\]

Comparison with the cubic Smith relation
\[
[E,F]=2(H^3-cH)
\]
therefore gives the formal correspondence
\[
c=1-\frac{6}{\hbar^2}.
\]

In the tensor-product realization of Section~\ref{sec:smith},
the central parameter is $c=T_1+T_2+1$.
Consequently, the formal correspondence becomes
\[
\hbar^2=-\frac{6}{T_1+T_2}.
\]
On $L_n\otimes L_m$, where $T_1=n(n+2)$ and
$T_2=m(m+2)$, this reads
\[
\hbar^2
=-\frac{6}{n(n+2)+m(m+2)}
\]
whenever $(n,m)\ne(0,0)$.

The parameter relation also suggests a physical interpretation.
Since
\[
c-1=T_1+T_2=n(n+2)+m(m+2),
\]
the parameter correspondence gives
$|\hbar|\sim\sqrt{6/c}$ as the spins grow.
Thus increasing the spins corresponds to decreasing
$|\hbar|$ at the level of these parameters.
However, the expansion above is taken with $H$ fixed,
whereas the $H$-weights also grow in the tensor-product
construction.
Small $|\hbar|$ therefore does not by itself justify
truncating the commutator expansion on these modules.

The comparison concerns only the formal matching
of coefficients in the defining commutation relation.
It neither establishes a large-spin or semiclassical
limit nor provides an action of
$U_q(\mathfrak{sl}_2)$ on the singular subspace.
A representation-theoretic interpretation of this
parameter correspondence remains an open question.

\end{document}